\documentclass[12pt]{article} 
 \usepackage[margin=1in]{geometry}
\usepackage{setspace}
\usepackage{graphicx}	
\usepackage{subcaption}
\usepackage{float} 
\usepackage[toc]{appendix}
\usepackage[draft]{todonotes} 
\usepackage{enumerate}	
\usepackage{paralist}	
\usepackage{amsmath, amsthm, amssymb,mathtools}
\usepackage{thm-restate}

\usepackage[pdftex,hypertexnames=false,linktocpage=true,hidelinks]{hyperref}
\usepackage{cleveref}

\newtheorem{thm}{Theorem}[section]

\newtheorem{prop}[thm]{Proposition}
\newtheorem{claim}{Claim}
\newtheorem{lemma}[thm]{Lemma}

\newtheorem{prob}[thm]{Problem}
\theoremstyle{definition}

\theoremstyle{definition}
\newtheorem{const}{Construction}

\usepackage{tikz}
\makeatletter
\makeatother
\usepackage{xcolor}

\usetikzlibrary{decorations.pathreplacing}

\definecolor{amber}{rgb}{1.0, 0.75, 0.0}

\definecolor{forest}{rgb}{0.0, 0.5, 0.0}

\definecolor{cadmium}{rgb}{0.93, 0.53, 0.18}


\definecolor{byzantine}{rgb}{0.74, 0.2, 0.64}

\definecolor{brilliantrose}{rgb}{1.0, 0.33, 0.64}




\definecolor{caribbeangreen}{rgb}{0.0, 0.8, 0.6}

\definecolor{electriccyan}{rgb}{0.0, 1.0, 1.0}

\definecolor{periwinkle}{rgb}{0.8, 0.8, 1.0}

\definecolor{steelblue}{rgb}{0.27, 0.51, 0.71}

\title{Proper Rainbow Saturation Numbers for Cycles}

\begin{document}

\author{Anastasia Halfpap\thanks{
Department of Mathematics, Iowa State University, Ames, IA, USA.  E-mail \texttt{ahalfpap@iastate.edu}
}
\and 
Bernard Lidick\'y\thanks{
Department of Mathematics, Iowa State University, Ames, IA, USA.  E-mail \texttt{lidicky@iastate.edu}
}
\and
Tom\'a\v{s} Masa\v{r}\'ik\thanks{Institute of Informatics, Faculty of Mathematics, Informatics and Mechanics, University of Warsaw, Warszawa, Poland. E-mail \texttt{masarik@mimuw.edu.pl}}
}

\maketitle

\begin{abstract}
We say that an edge-coloring of a graph $G$ is \textit{proper} if every pair of incident edges receive distinct colors, and is \textit{rainbow} if no two edges of $G$ receive the same color. Furthermore, given a fixed graph $F$, we say that $G$ is \textit{rainbow F-saturated} if $G$ admits a proper edge-coloring which does not contain any rainbow subgraph isomorphic to $F$, but the addition of any edge to $G$ makes such an edge-coloring impossible. The maximum number of edges in a rainbow $F$-saturated graph is the rainbow Tur\'an number, whose study was initiated in 2007 by Keevash, Mubayi, Sudakov, and Verstra\"ete. Recently, Bushaw, Johnston, and Rombach introduced study of a corresponding saturation problem, asking for the \textit{minimum} number of edges in a rainbow $F$-saturated graph. We term this minimum the \textit{proper rainbow saturation number} of $F$, denoted $\mathrm{sat}^*(n,F)$. 
We asymptotically determine $\mathrm{sat}^*(n,C_4)$, answering a question of Bushaw, Johnston, and Rombach. We also exhibit constructions which establish upper bounds for $\mathrm{sat}^*(n,C_5)$ and $\mathrm{sat}^*(n,C_6)$.

\end{abstract}

\section{Introduction}

A central problem in extremal graph theory is to understand the set of $n$-vertex graphs $G$ which do not contain some forbidden subgraph $F$. Formally, given graphs $G, F$, we say that $G$ \textit{contains} a copy of $F$ (or \emph{$F$-copy}) if $G$ contains a subgraph (not necessarily induced) isomorphic to $F$; if $G$ does not contain a copy of $F$, we say that $G$ is \textit{F-free}. Note that if $G$ is $F$-free, then all subgraphs of $G$ are also $F$-free; thus, it is natural to restrict our attention to edge-maximal $F$-free $n$-vertex graphs, as these contain all $F$-free $n$-vertex graphs. We use $V(G), E(G)$ to denote the vertex and edge sets of $G$, and for $x,y \in V(G)$ such that $xy \not\in E(G)$, we denote by $G + xy$ the graph on vertex set $V(G)$ with edge set $E(G) \cup \{xy\}$. A graph $G$ is \textit{F-saturated} if $G$ is $F$-free but, for any $e \not\in E(G)$, the graph $G + e$ contains a copy of $F$. 

Further restricting our focus, we may ask which $F$-saturated $n$-vertex graphs are somehow extremal. That is, in the set of $n$-vertex $F$-saturated graphs, which elements optimize some graph parameter? This question yields two natural avenues of research. The \textit{Tur\'an number} $\mathrm{ex}(n,F)$ is the maximum number of edges among all $n$-vertex, $F$-saturated graphs. Famously first considered by Mantel~\cite{Mantel} in the case $F = K_3$, and for general cliques by Tur\'an~\cite{Turan41}, the study of $\mathrm{ex}(n,F)$ remains a vibrant area of study in its own right, as well as giving rise to a variety of natural generalizations and variations. On the other hand, the \textit{saturation number} $\mathrm{sat}(n,F)$ is the \textit{minimum} number of edges among all $n$-vertex, $F$-saturated graphs. The study of saturation numbers is also well-established (see, for instance, \cite{satsurvey}) and, like Tur\'an problems, it is natural to generalize saturation problems to a variety of contexts. 

This paper concerns saturation problems in an edge-colored setting. An \textit{edge-coloring} of a graph $G$ is a function $c: E(G) \rightarrow \mathbb{N}$. We say that $c(e)$ is the \textit{color} of $e$, and that $c$ is a \textit{proper} edge-coloring if for any two incident edges $e,f$, we have $c(e) \neq c(f)$. An edge-colored graph is \textit{rainbow} if all of its edges receive different colors. Given fixed graphs $G$, $F$, and a proper edge-coloring $c$ of $G$, we say that $G$ is \textit{rainbow-F-free under} $c$ if $G$ does not contain any copy of $F$ which is rainbow with respect to $c$. Moreover, we say that $G$ is \textit{(properly) rainbow $F$-saturated} if $G$ satisfies the following conditions.

\begin{enumerate}
    \item There exists a proper edge-coloring $c$ of $G$ such that $G$ is rainbow-$F$-free under $c$;

    \item For any edge $e \not\in E(G)$, any proper edge-coloring of $G+e$ contains a rainbow $F$-copy.
\end{enumerate}

Motivated by a problem in additive number theory, Keevash, Mubayi, Sudakov, and Verstra\"ete~\cite{KMSV} introduced the \textit{rainbow Tur\'an number} $\mathrm{ex^*}(n,F)$ in 2007, which is the maximum number of edges in an $n$-vertex, rainbow $F$-saturated graph. Following the analogy between $\mathrm{ex}(n,F)$ and $\mathrm{ex}^*(n,F)$, it is natural to also consider the rainbow counterpart to $\mathrm{sat}(n,F)$. Bushaw, Johnston, and Rombach~\cite{Bushaw2022} recently initiated a study of this rainbow version of the saturation number, denoted $\mathrm{sat}^*(n,F)$, the minimum number of edges in an $n$-vertex rainbow $F$-saturated graph. We call $\mathrm{sat}^*(n,F)$ the \textit{proper rainbow saturation number} of $F$, since all edge-colorings in this setting are proper.  While slightly lengthy, this terminology distinguishes $\mathrm{sat}^*(n,F)$ from an already-studied function which has been termed the rainbow saturation number in the literature (see, e.g.,~\cite{barruscoloredsat}, \cite{behague2022rainbow}, \cite{giraorainbowsat}), and which does not assume a setting of proper edge-colorings. 

Given that consideration of $\mathrm{sat}^*(n,F)$ is extremely new, few results have been established in the area, and the general behavior of $\mathrm{sat}^*(n,F)$ remains unclear. For instance, while it is simple to observe that $\mathrm{ex}(n,F) \leq \mathrm{ex}^*(n,F)$ for all $F$, it is not obvious whether we have $\mathrm{sat}(n,F) \leq \mathrm{sat}^*(n,F)$ for all $F$. The following theorem illustrates that we sometimes have $\mathrm{sat}(n,F) < \mathrm{sat}^*(n,F)$, and in fact $\mathrm{sat}^*(n,F)$ may differ from $\mathrm{sat}(n,F)$ by a multiplicative factor. Here and throughout, we denote by $P_{\ell}$ the path on $\ell$ \textit{vertices} (that is, on $\ell - 1$ edges). 

\begin{thm}[{\cite[Theorem 3.5]{Bushaw2022}}]

For each $n \geq 16$, we have 
\[\lfloor \frac{4n}{5}\rfloor \leq \mathrm{sat}^*(P_4, n) \leq \frac{4}{5}n - \frac{17}{10}c,\]
where $0 \leq c \leq 4$ and $c \equiv -n \mod 5$.

\end{thm}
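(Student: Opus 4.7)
The plan is to prove matching upper and lower bounds of order $\tfrac{4n}{5}$, with quite different methods for the two sides.

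\textbf{Upper bound.} I would exhibit an explicit $n$-vertex rainbow-$P_4$-saturated graph built from disjoint copies of the star $K_{1,4}$. A single $K_{1,4}$ has $4$ edges on $5$ vertices and contains no $P_4$, so every proper edge-coloring of $G_0 = \lfloor n/5\rfloor\cdot K_{1,4}$ is vacuously rainbow-$P_4$-free. Saturation is verified by a short case check on the added edge $e$. If $e=u_1u_2$ is inside a single star with center $v$, then for each other leaf $u_j$ the path $u_j v u_1 u_2$ is a $P_4$ whose non-rainbow condition forces $c(vu_j)=c(u_1u_2)$; since this would have to hold for three distinct choices of $u_j$ carrying three distinct colors at $v$, it cannot hold in any proper coloring. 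If $e$ lies between two stars $A$ and $B$, I would distinguish the positions leaf--leaf, leaf--center, and center--center, and in each case observe that the several $P_4$'s through $e$ (extended through either side's star) yield mutually incompatible non-rainbow constraints, again by a pigeonhole argument on the leaf colors. To match the precise expression $\tfrac{4n}{5}-\tfrac{17c}{10}$ when $n\not\equiv 0\pmod 5$, I would replace one $K_{1,4}$ by a small tailored rainbow-$P_4$-saturated gadget on $5+c$ vertices with the appropriate edge count and redo the case check at the gadget's boundary.

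\textbf{Lower bound.} Let $G$ be rainbow-$P_4$-saturated with witness proper coloring $c$; the rainbow-$P_4$-free condition is equivalent to requiring $c(e_1)=c(e_3)$ for every $P_4$-instance with consecutive edges $e_1e_2e_3$. I would first forbid very sparse local structure: in particular $G$ has at most one isolated vertex, since two isolated vertices $u,u'$ satisfy that $G+uu'$ has no $P_4$ through the new edge and hence a trivial rainbow-$P_4$-free proper coloring, contradicting saturation. I would then establish the main \emph{per-component density lemma}: every component $H$ of $G$ satisfies $|E(H)|\ge \tfrac{4}{5}|V(H)|$, with equality if and only if $H\cong K_{1,4}$. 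Summing this inequality over the components of $G$ and invoking integrality of edge counts then gives $|E(G)|\ge \lfloor 4n/5\rfloor$.

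\textbf{Main obstacle.} The density lemma is where the work sits. My plan is to assume for contradiction a component $H$ with $|E(H)|<\tfrac{4}{5}|V(H)|$, pick a low-degree vertex $u\in H$, and exhibit an explicit non-adjacent $w$ (either in $H$ or in another low-density component) together with a proper recoloring of $G+uw$ that still obeys $c(e_1)=c(e_3)$ on every $P_4$, thereby violating saturation. The rigidity imposed by $c(e_1)=c(e_3)$ is the central difficulty, because it tightly couples colors at graph-distance $2$ inside $H$, and a local recoloring near $u$ can ripple through $H$ to create a rainbow $P_4$ elsewhere. To control this propagation I would induct on $|V(H)|$ and do a case analysis on the degree sequence near $u$, showing that in every non-extremal configuration there is enough free color budget at $u$ to absorb the edge $uw$. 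Once the density lemma is in hand, the floor in $\lfloor 4n/5\rfloor$ is forced by integer edge counts, and the $-\tfrac{17c}{10}$ residual in the upper bound is precisely the edge deficit contributed by the single non-$K_{1,4}$ gadget used when $n\not\equiv 0\pmod 5$.
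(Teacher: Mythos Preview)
This theorem is not proved in the paper; it is quoted verbatim from Bushaw, Johnston, and Rombach as background, to illustrate that $\mathrm{sat}^*(n,F)$ can differ from $\mathrm{sat}(n,F)$ by a multiplicative constant. There is therefore no proof in the present paper against which to compare your proposal.

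On the substance of your sketch: the upper-bound construction of disjoint $K_{1,4}$'s is the natural one, and your saturation case check is essentially correct (indeed, for an added edge $u_1t_1$ between leaves of two different stars, the paths $u_1t_1wt_j$ for the three remaining leaves $t_j$ already force the single color $c(u_1t_1)$ to equal three distinct values $c(wt_j)$, so the pigeonhole is immediate in every inter-star subcase). Your reformulation of rainbow-$P_4$-freeness as $c(e_1)=c(e_3)$ on every three-edge path is also correct and is the right lens for the lower bound.

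The real gap is the lower bound. Your per-component density lemma---every component $H$ satisfies $|E(H)|\ge \tfrac{4}{5}|V(H)|$---is asserted, and you correctly identify that the recoloring step is the crux, but the plan ``pick a low-degree $u$, add $uw$, recolor, induct on $|V(H)|$'' is not yet an argument. Two concrete issues: first, saturation is a hypothesis on $G$, not on an individual component $H$, so it is unclear what statement about $H$ alone you are inducting on; second, you have not specified which $w$ to choose or which edges to recolor, and the distance-$2$ color coupling you flag can indeed propagate arbitrarily far along a path in $H$, so a purely local recoloring near $u$ need not suffice. Before committing to this route you should consult the original source to see whether the argument there is componentwise or global.
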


For contrast, $\mathrm{sat}(n,P_4)$ is approximately $\frac{n}{2}$.

\begin{thm}[{\cite[Proposition 5]{KaszonyiTuza}}]

\[\mathrm{sat}(n,P_4) = \begin{cases} \frac{n}{2} & \text{ if $n$ is even} \\ \frac{n+3}{2} & \text{ if $n$ is odd} \end{cases}\]

\end{thm}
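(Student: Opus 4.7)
The plan is to combine a structural analysis of $P_4$-free graphs with the saturation condition, and then match the resulting lower bound with an explicit construction. First I would observe that a connected $P_4$-free graph has longest path of length at most $2$, forcing each connected component of $G$ to be one of: an isolated vertex, an edge $K_2$, a path $P_3$, a triangle $K_3$, or a star $K_{1,d}$ with $d \geq 3$.

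Next I would extract the saturation constraints: (a) $G$ has at most one isolated vertex, since the non-edge between two isolated vertices yields only a single extra edge; (b) if $G$ has an isolated vertex $u$, then every other component must be a triangle -- I would verify this by checking each of the four other component types and exhibiting a non-edge incident to $u$ whose addition fails to create a $P_4$ (e.g.\ joining $u$ to the center of a star, or to any vertex of an isolated edge, creates only a $P_3$); and (c) $G$ cannot contain a $P_3$ component, since adding the missing diagonal of such a $P_3$ produces a triangle and no $P_4$ anywhere in $G$.

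With these constraints in hand, the lower bound follows quickly. If $G$ has no isolated vertex, then $2|E(G)| = \sum_v \deg(v) \geq n$, giving $|E(G)| \geq n/2$, with equality exactly when $G$ is a perfect matching (requiring $n$ even). For $n$ odd with no isolated vertex, parity forces $|E(G)| \geq (n+1)/2$, and equality would require a single vertex of degree $2$ and all others of degree $1$, making $G$ a $P_3$ together with a perfect matching on $n-3$ vertices -- contradicting (c). Hence $|E(G)| \geq (n+3)/2$ in this case. The isolated-vertex case gives $|E(G)| = n-1 \geq (n+3)/2$ for $n \geq 5$ by (b), so it never beats this bound.

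Finally, the upper bound is witnessed by explicit constructions: a perfect matching for $n$ even, and a triangle together with a perfect matching on the remaining $n-3$ vertices for $n$ odd. A direct check -- e.g., a non-edge between distinct matching edges $\{u,x\}$ and $\{v,y\}$ produces the path $x-u-v-y$ -- confirms saturation. The most delicate step is the component-by-component analysis underlying constraint (b), which requires separately handling each of the five possible $P_4$-free component shapes; once (b) is pinned down, the parity-based lower bound and the explicit constructions are routine.
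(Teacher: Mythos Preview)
The paper does not prove this theorem; it merely cites it as Proposition~5 of K\'aszonyi and Tuza, so there is no ``paper's own proof'' against which to compare your argument.

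That said, your proposal is a correct and complete proof of the cited result. The structural classification of connected $P_4$-free graphs (isolated vertex, $K_2$, $P_3$, $K_3$, or a star) is accurate, and your three saturation constraints (a)--(c) are each justified by exhibiting a specific non-edge whose addition fails to create a $P_4$. The lower-bound counting is sound: in particular, your parity argument for odd $n$ correctly identifies that $(n+1)/2$ edges would force a $P_3$ component, which (c) forbids, pushing the bound up to $(n+3)/2$; and the isolated-vertex case indeed yields $n-1 \geq (n+3)/2$ once $n \geq 5$ (and is vacuous for smaller odd $n$ since $n-1$ must then be divisible by $3$). The matching and triangle-plus-matching constructions are easily verified to be saturated, as you indicate. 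The only omission is the trivial caveat that the formula should be read for $n \geq 3$, but this is standard.
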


Apart from $\mathrm{sat}^*(P_4, n)$, we do not have tight bounds on any proper rainbow saturation number, except for trivial cases where every proper coloring of $F$ is rainbow and thus $\mathrm{sat}(n,F) = \mathrm{sat}^*(n,F)$. The goal of this paper is to contribute to the understanding of the proper rainbow saturation numbers of cycles, in particular by determining $\mathrm{sat}^*(n,C_4)$ asymptotically. The previous best known bounds on $\mathrm{sat}^*(n,C_4)$ are due to Bushaw, Johnston, and Rombach~\cite{Bushaw2022}.

\begin{thm}[{\cite[Theorem 3.6]{Bushaw2022}}]

For $n \geq 4$, we have
\[n \leq \mathrm{sat}^*(n,C_4) \leq 2n-2.\]

\end{thm}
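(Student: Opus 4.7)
The plan splits along the two bounds.

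For the lower bound $\mathrm{sat}^*(n,C_4)\geq n$, I would show that every properly rainbow $C_4$-saturated graph $G$ on $n\geq 4$ vertices is both connected and contains a cycle, which together force $|E(G)|\geq n$. If $u,v$ lay in distinct components of $G$, then $G+uv$ would contain no $4$-cycle through the edge $uv$ (no length-$3$ path joins $u$ to $v$), so the given rainbow-$C_4$-free proper coloring of $G$ would extend to $G+uv$ by assigning $uv$ a fresh color, contradicting saturation. If $G$ were a tree on $n\geq 4$ vertices, some pair $x,y$ would lie at distance exactly $2$ (e.g., two leaves sharing a parent), and adding $xy$ creates only a triangle and no new $4$-cycle; again the coloring extends, a contradiction. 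Hence $G$ is connected and not a tree, so $|E(G)|\geq n$.

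For the upper bound $\mathrm{sat}^*(n,C_4)\leq 2n-2$, I would exhibit an explicit $n$-vertex rainbow $C_4$-saturated graph with $2n-2$ edges. For $n\geq 6$ my proposed construction is the wheel $W_{n-1}=K_1+C_{n-1}$ with hub $u$ and rim $v_1,\dots,v_{n-1}$. To verify property~(1), I would use the coloring assigning color $\alpha_i:=i$ to spoke $uv_i$ and color $\alpha_{i+2}$ (indices mod $n-1$) to rim edge $v_iv_{i+1}$: this is proper because at each rim vertex the incident colors $\alpha_i,\alpha_{i+1},\alpha_{i+2}$ are pairwise distinct, and every $4$-cycle of $W_{n-1}$ has the form $uv_iv_{i+1}v_{i+2}$ with color sequence $\alpha_i,\alpha_{i+2},\alpha_{i+3},\alpha_{i+2}$, non-rainbow due to the repeated $\alpha_{i+2}$. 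For property~(2), I would fix an arbitrary non-edge $v_iv_j$ at cyclic rim-distance $d$, enumerate the $4$-cycles of $W_{n-1}+v_iv_j$ through the new edge (at most four via the hub, plus purely rim cycles when a rim path of length $3$ is available between $v_i$ and $v_j$), convert each non-rainbow condition into a single color equation, and combine with the proper-coloring and existing-hub-cycle constraints to derive a contradiction by case analysis on $d$.

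The main obstacle is the case $d=2$, where only four hub cycles arise; the new-edge color $\gamma$ can match at most one of the four incident spoke colors, so at least three of these cycles require a rim-color equals spoke-color equation, and tracing these shows that two rim edges incident to a common rim vertex must share a color, violating the proper-coloring condition. Small cases require separate treatment: $n=4$ is trivial since $K_4=W_3$ has no non-edges and admits the standard proper $3$-edge-coloring in which every $C_4$ uses only two colors; for $n=5$, the wheel $W_4$ fails property~(1) because the rim is itself a $4$-cycle whose non-rainbow condition is incompatible with the four hub-cycle requirements, so I would instead use the graph $B_3+w_1w_2$, namely the book on three pages with one extra edge joining two of its page vertices, which has $8=2n-2$ edges and can be verified rainbow $C_4$-saturated by direct inspection of its two non-edges.
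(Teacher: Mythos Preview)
This theorem is quoted from Bushaw--Johnston--Rombach and is not proved in the present paper, so there is no in-paper argument to compare against. I can still assess your proposal on its merits.

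Your lower bound is fine: connectedness and the tree-exclusion argument are both correct and standard.

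Your upper-bound construction, the wheel $W_{n-1}$ for $n\ge 6$, is also correct, and the spoke/rim colouring you describe does avoid rainbow $C_4$'s. For the saturation part, your outline is on the right track but the bookkeeping in the $d=2$ case is slightly off. With the new edge $v_1v_3$, the four new $4$-cycles yield the alternatives
\[
(\gamma=\alpha_2 \text{ or } \beta_2=\alpha_1),\quad
(\gamma=\alpha_2 \text{ or } \beta_1=\alpha_3),\quad
(\gamma=\alpha_4 \text{ or } \beta_3=\alpha_1),\quad
(\gamma=\alpha_{n-1} \text{ or } \beta_{n-1}=\alpha_3),
\]
so $\gamma=\alpha_2$ disposes of \emph{two} cycles at once, not one; the remaining forced equations $\beta_3=\alpha_1$ and $\beta_{n-1}=\alpha_3$ are not yet contradictory by themselves. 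You then genuinely need the pre-existing hub-cycle constraints (which you do mention): feeding $u\,v_2v_3v_4$ and $u\,v_1v_2v_3$ back in forces $\beta_2=\alpha_4$ and then $\beta_1=\alpha_3$, which collides with $\beta_{n-1}=\alpha_3$ at $v_1$. So the argument closes, but not quite by the mechanism you stated.

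A much cleaner route---and one the present paper develops for its own construction---is Lemma~\ref{traps}: the hub $u$ has $N(u)\cong C_{n-1}$, and any chord added to a cycle of length $\ge 5$ produces either a $C_4$, or a $C_k$ ($k\ge 5$) with a pendant edge, inside $N(u)$; both are on the forbidden list, so no rainbow-$C_4$-free proper colouring survives. This handles all $d$ at once and avoids the case analysis entirely. Your small cases $n=4$ (the $3$-edge-coloured $K_4$) and $n=5$ (the book $B_3$ with one extra page-edge) are correct; for $n=5$ one can again verify saturation quickly via Lemma~\ref{traps}, since adding either missing edge creates a $C_4$ inside $N(a)$.
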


For comparison, the ordinary saturation number of $C_4$ is known exactly.

\begin{thm}[\cite{Ollman}]

$\mathrm{sat}(n,C_4) = \lfloor \frac{3n - 5}{2} \rfloor$.

\end{thm}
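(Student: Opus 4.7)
The plan is to prove both inequalities separately: the upper bound by an explicit construction achieving $\lfloor(3n-5)/2\rfloor$ edges, and a matching lower bound by structural analysis of any $C_4$-saturated graph. Throughout I would use the basic characterization that $G$ is $C_4$-saturated if and only if $G$ is $C_4$-free and, for every non-edge $uw$, there is a path $u$--$x$--$y$--$w$ of length $3$ in $G$ whose internal vertices are distinct from $u,w$. I will assume $n$ is large enough that the generic construction applies; small cases can be verified by hand.

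For the upper bound with odd $n$, I would take a vertex $v$ adjacent to $u_1,\ldots,u_{n-2}$, a pendant $u_0$ attached to $u_1$, and a matching $M$ on $\{u_1,\ldots,u_{n-2}\}$ that includes $u_1$ and covers all but one vertex. This gives $(n-2)+1+(n-3)/2=(3n-5)/2$ edges. A direct check shows no 4-cycle through $v$ (which would require a vertex of $\{u_1,\ldots,u_{n-2}\}$ with two distinct $M$-partners), and the pendant $u_0$ has degree $1$ so lies on no cycle. Saturation is verified by routing: any added non-edge closes a 4-cycle either through $v$ and a matching edge, or through the path $u_0$--$u_1$. For even $n$, I would use the analogous construction with two pendants attached to the two endpoints of a single $M$-edge, landing at $(3n-6)/2$ edges.

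For the lower bound, suppose $G$ is $C_4$-saturated with $m$ edges. First, $\delta(G)\geq 1$, since an isolated vertex cannot participate in any newly formed 4-cycle after edge addition. Second, if $\delta(G)=1$ with pendant $v$ and its unique neighbor $w$, then the saturation criterion for every non-edge $uv$ forces a length-$2$ path from $u$ to $w$ avoiding $v$, placing strong constraints on the distribution of edges around $w$ and on other pendants. A case split on $\delta(G)$, combined with double counting of length-$3$ paths anchored at non-edges, then yields $m \geq \lfloor(3n-5)/2\rfloor$.

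The main obstacle lies in the lower bound: a coarse bound $m\geq cn$ is easy, but pinning down the exact coefficient $\tfrac{3}{2}$ and the parity-dependent floor requires ruling out efficient configurations with several clustered low-degree vertices. I expect this is handled by a discharging or potential-function argument, charging the deficit from each degree-$1$ or degree-$2$ vertex against its higher-degree neighbors and showing the net sum cannot drop below $(3n-5)/2$. The small parity cases, exemplified by $C_5$ being extremal at $n=5$ rather than the star-based construction, would then be checked directly.
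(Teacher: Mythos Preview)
The paper does not prove this theorem at all: it is quoted from Ollmann~\cite{Ollman} purely as background, to contrast the classical saturation number with the proper rainbow saturation number studied in the paper. So there is no ``paper's own proof'' to compare against, and your proposal stands or falls on its own.

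Your upper-bound construction is correct. For odd $n$ the ``star plus matching plus pendant'' graph you describe is $C_4$-free (any $4$-cycle through the centre $v$ would need some $u_i$ with two matching partners; the pendant has degree~$1$), and your saturation check covers all non-edge types. The even-$n$ variant with two pendants on the ends of one matching edge also works and gives $(3n-6)/2$ edges. This is essentially Ollmann's extremal family.

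The lower bound, however, is not a proof but a hope. You write that a case split on $\delta(G)$ ``combined with double counting of length-$3$ paths'' yields the bound, and then immediately concede that the hard part ``is handled by a discharging or potential-function argument'' that you \emph{expect} to work. Neither the double count nor the discharging is specified, and this is exactly where the difficulty lies: crude counts of $P_4$'s anchored at non-edges give only $m \geq n + O(1)$, not $\tfrac{3}{2}n$, because a single high-degree vertex can serve as the internal vertex of many such paths simultaneously. Ollmann's original argument is a lengthy case analysis precisely because one must rule out configurations where a few hubs absorb nearly all the saturation burden while most other vertices have degree~$2$. Your outline does not indicate how you would control this, and the phrase ``I expect'' signals that you have not actually carried it through. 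To complete the proof you would need to make the discharging rules explicit and verify them against the problematic low-degree configurations, or else reproduce Ollmann's structural classification; as written, the lower bound is missing.
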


We contribute the following bounds on $\mathrm{sat}^*(n,C_4)$, which asymptotically determine its value and show that $\mathrm{sat}^*(n,C_4)$ is separated from $\mathrm{sat}(n,C_4)$ by a constant multiplicative factor.

\begin{restatable}{thm}{mainupper}\label{c4 upper bound}

For $n \geq 7$, 

\[\mathrm{sat}^*(n,C_4) \leq \frac{11}{6} n + O(1).\]

\end{restatable}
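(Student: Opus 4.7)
The plan is to prove the upper bound by constructing an explicit graph $G_n$ on $n$ vertices with at most $\tfrac{11}{6}n + O(1)$ edges, then verifying the two conditions of rainbow $C_4$-saturation. The natural shape of $G_n$ is a repeating gadget structure: fix a small graph $H$ on $6$ vertices and $11$ edges, and form $G_n$ by stringing together roughly $n/6$ copies of $H$ along a shared spine, for instance by identifying one vertex per copy along a path. A direct count then gives $|E(G_n)| = \tfrac{11}{6}n + O(1)$, and the spine supplies both connectivity and extra common-neighbor structure for non-adjacent vertices in different copies. The gadget $H$ must be chosen so that (a) $H$ admits a proper edge-coloring with no rainbow $C_4$, which extends periodically to all of $G_n$ using a bounded number of colors, and (b) $H$ has enough internal structure on its pairs of non-adjacent vertices to power the saturation argument below.

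The key enabling tool is a short pigeonhole lemma on common neighbors. Suppose vertices $u,v$ share $k$ common neighbors $w_1,\dots,w_k$, and in a proper edge-coloring set $a_i = c(uw_i)$ and $b_i = c(vw_i)$. The four-cycle $uw_ivw_j$ is rainbow precisely when the four colors are distinct; because the $a_i$'s are pairwise distinct, the $b_i$'s are pairwise distinct, and $a_i\neq b_i$, the only obstruction on the pair $(i,j)$ is $a_i = b_j$ or $a_j = b_i$. These coincidences form a partial matching on $\{1,\dots,k\}$, so they cover at most $k$ of the $\binom{k}{2}$ unordered pairs. Consequently, $k\geq 4$ common neighbors already force a rainbow $C_4$, and for $k=3$ the only rainbow-$C_4$-free possibility is a rigid cyclic pattern such as $a_1 = b_2$, $a_2 = b_3$, $a_3 = b_1$, using only three colors on the six edges of the resulting $K_{2,3}$.

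With this lemma in hand, the saturation check splits into cases according to the position of a non-edge $uv$: (i) $u,v$ lie inside a single copy of $H$; (ii) $u,v$ lie in adjacent copies along the spine; (iii) $u,v$ lie in far-apart copies. Cases (i) and (iii) will typically yield at least four common neighbors via $H$ or the spine, so the lemma applies immediately. The main obstacle is case (ii): here one is forced to $k=3$ common neighbors, so the rigid cyclic three-color pattern must be ruled out explicitly, by exhibiting further edges of $H$ incident to the $w_i$ (or to $u,v$) whose colors are already constrained in a way incompatible with that pattern. Designing $H$ and the spine carefully enough that every cross-gadget non-edge is handled in this way, without spending more than the allotted $\tfrac{11}{6}n + O(1)$ edges, is the delicate balancing act at the heart of the argument; the final write-up would present $H$ explicitly, exhibit the coloring, and conclude with this case analysis.
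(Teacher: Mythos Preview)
Your proposal has a structural flaw that cannot be repaired by choosing the gadget $H$ cleverly. Any rainbow $C_4$-saturated graph has diameter at most $3$: if $d(u,v)\geq 4$ then adding the edge $uv$ creates no $C_4$-copy at all, so certainly no rainbow one. A construction built by stringing $\Theta(n)$ gadgets along a path has diameter $\Theta(n)$, so it cannot be rainbow $C_4$-saturated. This is exactly your case (iii): vertices $u,v$ in far-apart copies share \emph{no} common neighbors (not four), and adding $uv$ produces no $4$-cycle. Your assertion that ``Cases (i) and (iii) will typically yield at least four common neighbors via $H$ or the spine'' is simply false for any path-like spine.

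The paper's construction avoids this by using a single universal vertex $u$ whose neighborhood induces a disjoint union of spiders $S_{1,2,2}$ (six vertices, five edges each); together with the six edges from $u$ into each spider, this gives $11$ edges per $6$ non-$u$ vertices. Every missing edge lies inside $N(u)$, so the saturation argument is local: one shows that adding any edge in $N(u)$ creates a subgraph in $N(u)$ (such as $C_4$, $D_{2,2}$, or a triangle with two pendants) that is incompatible with any rainbow-$C_4$-free proper coloring. This is quite different from your common-neighbor pigeonhole lemma, which would not by itself suffice even for the paper's construction: most non-edges there do not acquire four common neighbors after addition, and the obstruction is instead a structural one inside a single neighborhood. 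Beyond the diameter issue, your write-up also never specifies $H$ or the coloring, so the core verification (that the construction is rainbow-$C_4$-free and that case (ii) actually fails) is entirely deferred; but even granting all of that, the global shape is wrong.
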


\begin{restatable}{thm}{mainlower}\label{c4 lower bound}
  Let $\frac{11}{45}>\varepsilon > 0$ be given. There exists $n_0 \in \mathbb{N}$ such that, for all $n \geq n_0$, 
\[\mathrm{sat}^*(n,C_4) > \left( \frac{11}{6} - \varepsilon \right) n.\]

\end{restatable}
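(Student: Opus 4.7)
\medskip
\noindent\textbf{Proof plan for \Cref{c4 lower bound}.} The plan is to combine a structural analysis of low-degree vertices in a rainbow $C_4$-saturated graph with a discharging argument forcing the average degree to be at least $\frac{11}{3}-O(\varepsilon)$. The starting observation is that if $uv\notin E(G)$, then every proper edge-coloring of $G+uv$ contains a rainbow $C_4$; since $G$ itself admits a rainbow-$C_4$-free proper coloring, this $C_4$ must use the new edge $uv$ in any coloring that restricts to a rainbow-$C_4$-free coloring of $G$, and hence must arise from a length-$3$ path $u-y-x-v$ in $G$. Consequently, for every rainbow-$C_4$-free proper coloring $c$ of $G$ and every color $\chi$ admissible for $uv$, there must exist such a path with $c(uy),c(yx),c(xv),\chi$ pairwise distinct. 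Contrapositively, exhibiting a proper coloring of $G$ together with a choice of $\chi$ for which \emph{every} length-$3$ path between $u$ and $v$ fails to produce a rainbow $C_4$ contradicts saturation at $uv$. This is the lever I would use throughout.

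The first step is to prove structural lemmas showing that vertices of degree $1$, $2$, and $3$ have rigidly constrained neighborhoods. Degree-$1$ vertices should be essentially ruled out, and a vertex $v$ of degree $2$ or $3$ should force its small neighborhood to contain many edges colored in specific ways. For each candidate bad configuration around a low-degree $v$, I would select a non-edge $uv$ (typically with $u$ at distance $2$ or $3$ from $v$), and then modify the coloring of a few edges near $v$---using standard Vizing-style color-switching along short Kempe chains---so that every length-$3$ path $u-y-x-v$ can be defeated by the adjusted coloring together with a suitable $\chi$. In practice, I expect two coloring tactics to suffice: arranging $c(uy)=c(xv)$ on each candidate path, or choosing $\chi$ equal to the middle color $c(yx)$ of the unique path. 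With these lemmas in hand, I would set up a discharging argument: each vertex begins with charge equal to its degree, vertices of degree at least $4$ donate prescribed small charges to nearby low-degree vertices according to rules calibrated to the structural lemmas, and every vertex ends with charge at least $\frac{11}{3}-O(\varepsilon)$. Summing over $V(G)$ yields $2|E(G)|\geq \left(\frac{11}{3}-O(\varepsilon)\right)n$.

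The main obstacle is the recoloring step inside the structural lemmas. A local modification that destroys every candidate rainbow $C_4$ through the added edge $uv$ can inadvertently create a new rainbow $C_4$ elsewhere in $G$; keeping track of both effects simultaneously requires careful choice of Kempe swaps and attention to how colors propagate along short paths, especially near higher-degree vertices where color-availability is limited. The second source of difficulty is identifying which low-degree configurations are the truly extremal ones, since the precise rate $\frac{11}{6}$ must emerge from the tightest such local trade-off; the error term $\varepsilon$ will come from lemmas whose conclusions hold only up to lower-order terms in the number of degree-$2$ or degree-$3$ vertices. Once these trade-offs are nailed down, the discharging bookkeeping should be straightforward, if somewhat case-heavy.
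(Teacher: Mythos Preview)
Your plan differs substantially from the paper's, and the discharging step has a real gap. The paper does not discharge at all. It first proves a structural dichotomy (Lemma~\ref{dominating sets}): any rainbow $C_4$-saturated $G$ contains a dominating set $D$ with either $|D|\le\alpha n$ or $G[D]$ of average degree at least $4$. Given such $D$, one counts edges via the components of $G[V\setminus D]$: each component $C$ contributes at least $2|V(C)|-1$ incident edges, with equality only when $C$ is a tree and every vertex of $C$ has exactly one neighbor in $D$. Recoloring arguments (Claim~\ref{L comps}, similar in flavor to what you sketch) then show that, with $O(1)$ exceptions, each such ``sparse'' component has at least $6$ vertices, whence $2|V(C)|-1\ge\frac{11}{6}|V(C)|$. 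Summing gives the bound.

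The problem with your scheme is visible already in the extremal Construction~\ref{const}. There a single universal vertex $u$ carries essentially all the excess charge: each $S_{1,2,2}$ spider in $N(u)$ has one vertex of degree $4$, two of degree $3$, and three of degree $2$, a net deficit of $6$ against the target $\tfrac{11}{3}$, while the degree-$4$ spider center can spare only $\tfrac{1}{3}$. So any rule of the form ``degree-$\ge 4$ vertices donate prescribed small charges to nearby low-degree vertices'' cannot balance unless you first prove that every low-degree vertex is adjacent to a vertex of \emph{linearly large} degree---and that is simply not true in general; it holds only up to a small-dominating-set alternative, which is exactly what Lemma~\ref{dominating sets} supplies and your outline lacks. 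Your local recoloring lemmas may well be correct and are close in spirit to what the paper uses inside Claim~\ref{L comps}, but on their own they do not tell you where the charge is supposed to come from.
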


For cycles of length greater than $4$, little is known. Unlike other saturation numbers, it is not even known whether $\mathrm{sat}^*(n,F)$ is always linear in $n$, although Bushaw, Johnston, and Rombach~\cite{Bushaw2022} describe a class of graphs with linear proper rainbow saturation number.

\begin{thm}[{\cite[Theorem 4.2]{Bushaw2022}}]\label{general bound}
Suppose that $F$ contains no induced even cycle. Then there is a constant $c$ depending only on $F$ such that $\mathrm{sat}^*(n,F) \leq cn$.
\end{thm}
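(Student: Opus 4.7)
The plan is to prove the bound by explicit construction. Fix a graph $F$ with no induced even cycle and set $f = |V(F)|$. For each sufficiently large $n$, I would build a graph $G_n$ on $n$ vertices with at most $cn$ edges (where $c = c(F)$ depends only on $F$) and verify that $G_n$ is properly rainbow $F$-saturated. The construction splits $V(G_n)$ into a constant-sized \emph{core} $H = H(F)$ and a set of $n - |V(H)|$ \emph{attached} vertices, each joined to $H$ by a bounded number of edges. Since the number of edges within $H$ is a constant depending only on $F$, and each attached vertex contributes at most a constant number of edges, the total edge count is $O(n)$ with the hidden constant depending only on $F$.

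First, I would design the core $H$ so that $H$ itself is properly rainbow $F$-saturated under some proper edge coloring $c_0$: that is, $H$ admits a proper coloring with no rainbow $F$-copy, and yet adding any non-edge inside $H$ forces a rainbow $F$-copy under every proper coloring. A natural candidate is a suitably dense graph on $O(f)$ vertices, chosen so that it is packed with copies of $F$ minus an edge in essentially every way. Next, for each attached vertex $v$, I would join $v$ to a prescribed \emph{socket} $S_v \subset V(H)$ of bounded size. The sockets would be chosen so that $v$ behaves as a twin of some specific vertex in $H$, and so that the proper coloring $c_0$ extends to a proper rainbow-$F$-free coloring of $G_n$ by assigning the edges incident to $v$ a short list of colors consistent with properness.

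The verification that $G_n$ is properly rainbow $F$-saturated requires checking, for every non-edge $e$, that every proper coloring of $G_n + e$ contains a rainbow $F$-copy. This splits into three cases: $e$ lies inside $V(H)$, $e$ joins $V(H)$ to an attached vertex, or $e$ joins two attached vertices. In each case, the strategy is to exhibit a family of $F$-copies containing $e$ and argue that, under properness of the coloring, at least one of these copies must be rainbow. Here the hypothesis that $F$ has no induced even cycle should play a central role: in a proper edge coloring, induced even cycles are the canonical obstruction that allows an $F$-copy to avoid being rainbow, since they admit a $2$-coloring that repeats colors on non-incident edges. When $F$ has no induced even cycles, one can argue that in any proper coloring of the relevant local subgraph, the edges of any copy of $F$ passing through $e$ are forced to take pairwise distinct colors.

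The main obstacle is the third case, where the added non-edge $e = uv$ joins two attached vertices whose sockets $S_u$, $S_v$ may be disjoint or separated inside $H$. The new edge $e$ is not directly incident to the core, and one must piece together an $F$-copy using $e$, the socket edges from $u$ and $v$, and edges of $H$. To make the argument go through, the sockets must be chosen so that for every such pair $(u,v)$ there are enough $F$-copies through $e$ whose edges are forced (by properness together with the no-induced-even-cycle hypothesis) to carry pairwise distinct colors. Controlling these color constraints simultaneously with the rainbow-$F$-free coloring $c_0$ on the rest of $G_n$ is the heart of the proof, and dictates how the core $H$ and the attachment scheme must be engineered.
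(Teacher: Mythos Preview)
The paper does not prove this statement at all: Theorem~\ref{general bound} is quoted from \cite{Bushaw2022} as background, with no proof given here. So there is no ``paper's own proof'' to compare your proposal against.

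That said, your proposal as written is a plan rather than a proof, and it has a genuine gap at the conceptual level. You never actually use the hypothesis that $F$ has no induced even cycle in a concrete way; your remark that induced even cycles ``admit a $2$-coloring that repeats colors on non-incident edges'' is true of any pair of independent edges, not just even cycles, so it does not isolate what is special about the hypothesis. The argument in \cite{Bushaw2022} hinges on a structural fact you have not identified: when $F$ has no induced even cycle, one can build a single finite ``trap'' graph $T$ (depending only on $F$) with the property that \emph{every} proper edge-coloring of $T$ contains a rainbow copy of $F$. Once such a $T$ is in hand, rainbow $F$-saturation reduces to ordinary saturation for a finite family, and the linear bound follows from classical K\'aszonyi--Tuza type results. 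Your core-plus-sockets construction is not wrong in spirit, but without the trap lemma you have no mechanism to guarantee that adding an edge between two attached vertices forces a rainbow $F$ under \emph{every} proper coloring; you acknowledge this as ``the heart of the proof'' but do not supply it. The hypothesis on induced even cycles is precisely what makes the trap exist, and that is the missing idea.
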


Theorem~\ref{general bound} can be used to derive some upper bound on proper rainbow saturation numbers for odd cycles, but the constant $c$ given may be very large. There are no published bounds on $\mathrm{sat}^*(n,C_{2\ell})$ for $\ell > 2$. We contribute constructions that improve this state of affairs for $C_5$ and $C_6$. For $C_5$, we obtain a single bound regardless of the parity of $n$.

\begin{restatable}{thm}{fivecycle}\label{c5 bound}
For $n \geq 9, \mathrm{sat}^*(n,C_5) \leq \lfloor \frac{5n}{2}\rfloor - 4$.
\end{restatable}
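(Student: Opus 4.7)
The bound is proved by exhibiting an explicit construction. For each $n \geq 9$, I would build an $n$-vertex graph $G_n$ with exactly $\lfloor 5n/2 \rfloor - 4$ edges together with a proper edge-coloring $c$ of $G_n$ having no rainbow $C_5$, and then verify that for every non-edge $uv$ of $G_n$, every proper edge-coloring of $G_n + uv$ produces a rainbow $C_5$.

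Since the average degree of $G_n$ must be very nearly $5$, I would look for a construction that is almost $5$-regular, with a constant-sized local modification to absorb the $-4$ and to adjust for parity. Natural candidates are formed from small ``gadget'' graphs glued along a few shared vertices or edges: for instance, a chain of copies of $K_6$ identified pairwise at a single vertex, or a circular arrangement of $5$-cycles equipped with chords that make each gadget locally $5$-regular. The motivation for gadget-based constructions is that they keep the degree almost exactly $5$ while producing many short paths between every pair of non-adjacent vertices, which is essential for the saturation step.

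For the rainbow-$C_5$-free coloring, the goal is to arrange colors so that every $C_5$ in $G_n$ has a repeated color. The cleanest route is to choose a gadget whose chromatic index equals $5$ and reuse the same $5$-color palette across all gadgets, coordinating the coloring at the gluing points so that any $C_5$ crossing two gadgets is forced to repeat a color. If the chosen gadget happens to be $C_5$-free, rainbow-freeness of $c$ is automatic; otherwise one checks within-gadget $5$-cycles directly, which is a finite case analysis on a single gadget.

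The main difficulty, and the step I would tackle last, is verifying saturation: for each non-edge $uv$, showing that any proper edge-coloring of $G_n + uv$ contains a rainbow $C_5$. The natural strategy is to exhibit several $4$-edge $u$-$v$ paths whose internal edges are so constrained that at least one of them, together with $uv$, must use $5$ distinct colors in every proper edge-coloring; the quantity $\lfloor 5n/2 \rfloor - 4$ is governed by how many such paths are required at each non-edge. I expect this verification to split into two cases depending on whether $u$ and $v$ lie in the same gadget (handled by a direct check inside the gadget) or in different gadgets (handled by paths crossing the gluing structure). The latter is the principal obstacle, because the bridging structure is thin and coloring freedom is tight; if the simplest chained construction fails to supply enough cross-gadget paths, I would fall back on denser gluing, such as identifying edges rather than single vertices, at the cost of a small additive correction in the edge count.
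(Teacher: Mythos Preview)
Your proposal is a search plan rather than a proof, and the direction you are searching in has a concrete obstruction. In a chain of gadgets glued at single vertices (or even at single edges), two vertices $u,v$ lying in gadgets that are several links apart will have no path of length~$4$ between them at all, so $G_n+uv$ contains no $C_5$ through $uv$ and saturation fails outright. You anticipate this (``the bridging structure is thin''), but ``denser gluing'' does not obviously repair it: to guarantee a $4$-path between \emph{every} pair of non-adjacent vertices you need diameter at most~$4$, which is incompatible with a long chain of bounded-size gadgets. The average-degree heuristic that points toward near-$5$-regular graphs is misleading here.

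The paper's construction is much simpler and structurally opposite to what you are looking for: take two \emph{universal} vertices $u,v$ and a maximum matching on the remaining $n-2$ vertices. This has $2n-3+\lfloor (n-2)/2\rfloor=\lfloor 5n/2\rfloor-4$ edges. Every $C_5$ must use both $u$ and $v$ and hence contains a $P_4$ with endpoints $u,v$; coloring so that $c(ux_i)=c(vy_i)$ and $c(uy_i)=c(vx_i)$ for each matching edge $x_iy_i$ kills all such $P_4$'s. For saturation, any added edge lies in $V\setminus\{u,v\}$ and creates a $P_4$ from $u$ to $v$ that is forced to be rainbow in every proper coloring; since $u,v$ have at least $5$ further common neighbors when $n\ge 9$, a short pigeonhole argument extends this rainbow $P_4$ to a rainbow $C_5$. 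The key idea you are missing is to concentrate degree in a tiny dominating set so that every $C_5$ is funneled through a controllable substructure.
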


For $C_6$, our bound varies slightly with the congruence class of $n$ modulo $3$. To avoid separate cases, we allow a constant error term which absorbs this discrepancy.

\begin{restatable}{thm}{sixcycle}\label{c6 bound}
$\mathrm{sat}^*(n,C_6) \leq  \frac{7}{3} n + O(1)$.
\end{restatable}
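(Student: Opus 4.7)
The plan is to prove the bound by constructing, for each sufficiently large $n$, an explicit $n$-vertex graph $G$ together with a proper edge-coloring $c$ such that $|E(G)| \leq \tfrac{7}{3}n + O(1)$, $G$ contains no rainbow $C_6$ under $c$, and any non-edge addition forces a rainbow $C_6$ under every proper edge-coloring of the resulting graph.

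A natural candidate is a modular construction built from a bounded-size core. For instance, take two adjacent hub vertices $v_0, v_1$ and attach $m = \tfrac{n-2}{3}$ pairwise vertex-disjoint triangles $a_ib_ic_i$, each joined to the core by four additional edges (for example $a_iv_0, a_iv_1, b_iv_0, c_iv_1$). This yields $1 + 7m = \tfrac{7}{3}n + O(1)$ edges. The central hubs ensure small diameter, so every non-adjacent pair is joined by at least one length-$5$ path, a necessary condition for the saturation property. Handling the $O(1)$ leftover when $n \not\equiv 2 \pmod 3$ can be absorbed by attaching one or two extra small gadgets to the core.

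For the coloring $c$, I would use a small palette on the triangle-internal edges---reusing colors across distinct triangles, which is permissible since those edges are vertex-disjoint---while introducing many distinct colors on the core-incident edges. The key structural observation is that any $C_6$ in $G$ must either lie inside the closed neighborhood of the core (a bounded subgraph where the coloring can be checked by hand) or visit triangle-internal edges from at least two different triangles; in the latter case, the reused palette forces a color repetition. A finite case analysis over the possible shapes of $C_6$ in $G$ then verifies that $c$ is rainbow-$C_6$-free.

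The main obstacle is the saturation property. For every non-edge $uv$ and every proper edge-coloring $c'$ of $G + uv$, I must exhibit a rainbow $C_6$. The approach is to produce many length-$5$ paths from $u$ to $v$ in $G$, routed through different pairs of core vertices or different intermediate triangles, and then argue by pigeonhole on the constraints imposed by properness (adjacent edges receive distinct colors) that at least one such path, together with $uv$, carries six distinct colors under $c'$. The analysis naturally splits into cases: $u,v$ in a common triangle; $u,v$ in distinct triangles; one of $u,v$ equal to a hub; and the hub-hub case (excluded by the presence of $v_0v_1$). Engineering the gadget and its core-attachment so that every case admits enough path diversity to defeat any adversary coloring is where the bulk of the technical work lies.
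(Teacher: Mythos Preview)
Your high-level plan---a bounded core plus pendant triangles contributing seven edges each---matches the paper, so the edge count $\tfrac{7}{3}n+O(1)$ is right. But the specific two-hub gadget together with your stated colouring scheme provably fails the rainbow-freeness step. Your dichotomy (``any $C_6$ either lies in a bounded subgraph or uses triangle-internal edges from two triangles'') is false with two hubs: for every $i \neq j$ the cycle $v_0\, a_i\, v_1\, c_j\, a_j\, b_j\, v_0$ uses both hubs, the single vertex $a_i$, and all of $T_j$, and its only triangle-internal edges $c_j a_j,\, a_j b_j$ are adjacent in $T_j$, so the shared palette gives no repeat. Under your scheme (distinct colours on hub edges, small palette on triangle edges) this cycle is non-rainbow only if $c(v_1 c_j)=c(v_0 b_j)$. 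The other two Hamiltonian paths through $T_j$ yield the cycles $v_0\, a_i\, v_1\, a_j\, c_j\, b_j\, v_0$ and $v_0\, a_i\, v_1\, c_j\, b_j\, a_j\, v_0$, forcing also $c(v_1 a_j)=c(v_0 b_j)$ and $c(v_1 c_j)=c(v_0 a_j)$; combining the first two gives $c(v_1 c_j)=c(v_1 a_j)$, violating properness at $v_1$. So no colouring of the type you describe exists. The paper resolves this by attaching each triangle to \emph{three} core vertices $v_1,v_2,v_3$, allowing three of the four attachment edges to share one colour; it also records that the naive $C_5$-style construction (universal vertices plus triangles) cannot be coloured rainbow-$C_6$-free for large $n$.

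On saturation you have only a case list, and in the paper this is where the core must grow: five further vertices $v_4,\dots,v_8$ are added (the core becomes $K_8$ minus a perfect matching) solely to force rainbow $C_6$'s after any edge addition, and even then both properties are verified not by hand but by reducing to a finite instance (core plus two triangles) and discharging it with a SAT solver. Your pigeonhole-on-paths idea may be the right shape, but it cannot run on the two-hub gadget, and designing a gadget on which it does is the actual content of the theorem.
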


Using the elementary observation that, for $n > 2$, a rainbow $C_{\ell}$-saturated graph contains no acyclic component (since the addition of an edge either within an acyclic component yields a component containing at most one $C_{\ell}$ copy, which can be properly colored to avoid a rainbow $C_{\ell}$-copy, while an edge between distinct components creates no new cycles at all), we have the immediate lower bound $\mathrm{sat}^*(n,C_{\ell}) \geq n$ for all $\ell$ and all $n > 2$. Thus, the bounds given in Theorems~\ref{c5 bound} and~\ref{c6 bound} seem reasonable, although we do not attempt to find matching lower bounds.

The remainder of the paper is organized as follows. In Section~\ref{c4 section}, we present a construction establishing Theorem~\ref{c4 upper bound} and prove Theorem~\ref{c4 lower bound}. In Section~\ref{c56 section}, we present constructions establishing Theorems~\ref{c5 bound} and~\ref{c6 bound}.




\paragraph{Notation.}
We denote \emph{degree} of a vertex $v$ in a graph $G$ by $d_G(v)$ and the \emph{minimum degree} of a vertex $v$ in a graph $G$ by $\delta(G)$.
If $G$ is clear from context we omit the subscript and simply write $d(v)$ for the degree of $v$ in $G$.
Given vertices $u,v$ we denote \emph{distance} by $d(u,v)$.
That is, $d(u,v)$ is the minimum number of edges on a path from $u$ to $v$.
We use $N[v]$ to denote the \emph{closed neighbourhood} of a vertex $v$ and we let $N(v)\coloneqq N[v]\setminus \{v\}$.
For $S\subseteq V(G)$, we use $N(S)$ (resp.\ $N[S]$) as a shortcut for $\bigcup_{v\in S} N(v)$ (resp.\ $\bigcup_{v\in S} N[v]$).
Given a graph $G$ and $S \subset V(G)$, we use $G[S]$ to denote the subgraph of $G$ induced on $S$, that is, the graph with vertex set $S$ and edge set $E(G[S]) = \{uv \in E(G): u,v \in S \}$.

\section{Bounds for $C_4$}\label{c4 section}

We begin by improving the upper bound on $\mathrm{sat}^*(n,C_4)$, with a construction showing that $\mathrm{sat}^*(n,C_4) \leq \frac{11 n}{6} + O(1)$. Before stating the construction, we establish a variety of facts about properly rainbow $C_4$-saturated graphs, which will be useful throughout. We begin with a proposition collecting a few elementary observations.

\begin{prop}\label{observations}
Let $G$ be a rainbow $C_4$-saturated graph. Then the following hold.

\begin{enumerate}

  \item $G$ contains at most one vertex of degree $1$;\phantomsection\label{obs:deg1}

\item For any vertices $u,v \in V(G)$, $d(u,v) \leq 3$;\phantomsection\label{obs:dist}

\item For any vertices $u,v \in V(G)$, $|N(u) \cap N(v)| < 4$.\phantomsection\label{obs:nighbr}

\end{enumerate}

\end{prop}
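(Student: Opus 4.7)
The plan is to prove each item by contradiction, in each case exploiting the definition of rainbow $C_4$-saturation.

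For part~\ref{obs:deg1}, I would suppose $u \neq v$ both have degree $1$ in $G$ with (possibly equal) neighbors $u', v'$. If $uv \in E(G)$, then the component containing $u, v$ is a single edge, an acyclic component, which the introduction rules out for $n > 2$. Otherwise, any $C_4$ in $G+uv$ must contain $uv$ and hence corresponds to a $u$-$v$ path of length $3$ in $G$; since the only edges at $u, v$ are $uu'$ and $vv'$, such a path must read $u\,u'\,v'\,v$, and so exists only when $u' \neq v'$ and $u'v' \in E(G)$. If no such path exists, then $G+uv$ is rainbow-$C_4$-free under any proper extension of a rainbow-$C_4$-free coloring of $G$. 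If the path does exist, I would fix a rainbow-$C_4$-free proper coloring $c$ of $G$ and set $c(uv) := c(u'v')$; properness at $u'$ and $v'$ guarantees $c(u'v') \neq c(uu'), c(vv')$, so the extension is proper, and the unique new $C_4$ has two edges of color $c(u'v')$ and is not rainbow. Either way, saturation is violated.

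Part~\ref{obs:dist} is shorter. If $d(u,v) \geq 4$ or $u,v$ lie in distinct components, there is no $u$-$v$ path of length $3$ in $G$, so $G+uv$ contains no $C_4$ through $uv$. Any rainbow-$C_4$-free proper coloring of $G$ extends by giving $uv$ a fresh color (possible since the palette is $\mathbb{N}$), contradicting saturation.

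The main obstacle is part~\ref{obs:nighbr}. I would suppose $u, v$ have four distinct common neighbors $w_1, w_2, w_3, w_4$. Fixing a rainbow-$C_4$-free proper coloring $c$ of $G$, set $a_i := c(uw_i)$ and $b_i := c(vw_i)$. Propriety at $u$, at $v$, and at each $w_i$ forces the $a_i$'s to be pairwise distinct, the $b_i$'s to be pairwise distinct, and $a_i \neq b_i$ for every $i$. For each pair $i \neq j$, the 4-cycle $u\,w_i\,v\,w_j$ must fail to be rainbow; the only pairs of edges on this cycle permitted to share a color by propriety are the two pairs of opposite edges, so one must have $a_i = b_j$ or $a_j = b_i$. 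The set $M = \{(i,j) : i \neq j,\ a_i = b_j\}$ then forms a matching in the bipartite graph between the $a$-indices and the $b$-indices (injectivity from distinctness of the $a_i$'s and $b_j$'s, and no loops since $a_i \neq b_i$), so $|M| \leq 4$. Each element of $M$ covers only a single unordered pair $\{i,j\}$, so at most $4$ of the $\binom{4}{2} = 6$ required pairs can be covered, a contradiction. The subtle point is carefully using propriety to eliminate every non-opposite pair of edges as a possible repeated color, so that the combinatorial matching bound can be applied.
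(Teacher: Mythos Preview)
Your proof is correct. The paper actually omits a proof of this proposition, introducing it only as ``a proposition collecting a few elementary observations,'' so there is no approach to compare against. Your arguments for parts 1 and 2 are the natural ones. For part 3, your matching argument is a clean way to organize the verification: the observation that $M$ is a loopless matching in the bipartite graph on $4+4$ index-vertices, so $|M|\le 4$, while all $\binom{4}{2}=6$ unordered pairs must be covered, is exactly right, and your identification of the subtle point---that propriety at $u$, at $v$, and at each $w_i$ rules out every non-opposite coincidence on each $4$-cycle $u\,w_i\,v\,w_j$---is indeed the essential step.
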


Next, we prove the following key lemma, which will be required to demonstrate that our construction is properly rainbow $C_4$-saturated.

\begin{lemma}\label{traps}

  Let $G$ be a graph and $v \in V(G)$. If there exists a proper edge-coloring of $G$ which is rainbow $C_4$-free, then the subgraph of $G$ induced on $N(v)$ does not contain the following subgraphs (not necessarily induced); see \Cref{fig:notInNv}:

\begin{enumerate}
\item A copy of $K_3$ with pendant edges from two vertices;
\item $C_4$;
\item A copy of $C_k$ with a pendant edge, for any $k \geq 5$;
\item The double star $D_{2,2}$, or any subdivision thereof.

\end{enumerate}

\end{lemma}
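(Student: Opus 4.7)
The plan is to derive from each of the four forbidden configurations a uniform structural constraint and then exploit it case by case. Suppose, toward a contradiction, that $c$ is a proper rainbow-$C_4$-free edge-coloring of $G$, and set $\phi(u):=c(vu)$ for $u\in N(v)$. For any path $xyz$ of length two in $G[N(v)]$, the cycle $vxyzv$ is a $C_4$ of $G$, and properness already forces $\phi(x)\ne c(xy)$, $c(xy)\ne c(yz)$, $c(yz)\ne \phi(z)$, and $\phi(x)\ne \phi(z)$; so non-rainbowness collapses to
\[
(\star)\qquad \phi(x)=c(yz)\ \text{or}\ \phi(z)=c(xy).
\]
Alongside $(\star)$ I will use that $\phi$ is injective on $N(v)$ and that $\phi(u)\ne c(uu')$ whenever $uu'\in E(G[N(v)])$. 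Every contradiction below comes from applying $(\star)$ to a small set of $P_3$'s and ruling out all but one option at each step.

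For Part~2, the $C_4$ on $u_1u_2u_3u_4$ is itself a $C_4$ of $G$, so WLOG $c(u_1u_2)=c(u_3u_4)$. Applying $(\star)$ to $u_1u_2u_3$ rules out $\phi(u_3)=c(u_1u_2)$ by properness at $u_3$, giving $\phi(u_1)=c(u_2u_3)$; then $(\star)$ on $u_2u_3u_4$ gives $\phi(u_4)=c(u_2u_3)=\phi(u_1)$ or $\phi(u_2)=c(u_3u_4)=c(u_1u_2)$, both impossible. For Part~1, let the triangle be $u_1u_2u_3$ with pendants $u_1u_4,u_2u_5$. The three $P_3$'s inside the triangle, combined via $(\star)$, force at least two of the equalities $\phi(u_i)=c(u_ju_k)$ (with $\{i,j,k\}=\{1,2,3\}$) to hold; in each of the three resulting subcases, applying $(\star)$ to the four $P_3$'s through the pendant vertices $u_4,u_5$ forces one of $\phi(u_4),\phi(u_5)$ either to coincide with another $\phi$-value at $v$ or to violate properness at $u_1$ or $u_2$.

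Parts~3 and~4 rely on a propagation argument. In both, a degree-three vertex of the structure ($u_1$ in Part~3, a branch vertex $x$ in Part~4) has three distinguished neighbors $a,b,c$, and applying $(\star)$ to the three $P_3$'s among them shows, after discarding the options blocked by properness and by injectivity of $\phi$, that $(\phi(a),\phi(b),\phi(c))$ must be a nontrivial cyclic permutation of $(c(xa),c(xb),c(xc))$. In Part~3 one then propagates $(\star)$ around the cycle via the $P_3$'s $u_{i-1}u_iu_{i+1}$: at each step the alternative option would identify two cycle vertices at distance at most two, which the hypothesis $k\ge 5$ forbids. This yields $\phi(u_{i-1})=c(u_iu_{i+1})$ for every $i$, and closing the cycle at $P_3(u_{k-1},u_k,u_1)$ gives $\phi(u_{k-1})=c(u_1u_k)$, which matches either $\phi(u_2)$ or $\phi(w)$ by the cyclic pattern at $u_1$, forcing $u_{k-1}\in\{u_2,w\}$, impossible. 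Part~4 is analogous with the internal vertices of the $xy$-path of the $D_{2,2}$-subdivision playing the role of the cycle and the internal disjointness of the subdivision's five paths replacing $k\ge 5$; the propagation ends with a collision between a $\phi$-value forced by the cyclic pattern at $x$ and one forced by the cyclic pattern at $y$, the two corresponding vertices being distinct by internal disjointness.

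The main obstacle will be the bookkeeping in Parts~3 and~4: at every step of the propagation one must check that the alternative option of $(\star)$ cannot hold, using both injectivity of $\phi$ on $N(v)$ and the structural hypothesis. Part~4 is the most delicate, since the central path and four arms of the subdivision may be arbitrarily long, so I would spend extra care there tracking which vertex of the subdivision lies on which internally disjoint piece.
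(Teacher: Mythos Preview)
Your proposal is correct and follows essentially the same route as the paper: both arguments reduce everything to the single constraint you call $(\star)$ (the paper derives it afresh in each case without naming it) and then propagate it along the structure to reach a contradiction. Your explicit formulation of $(\star)$ and of the ``cyclic permutation at a degree-$3$ vertex'' is a clean repackaging of exactly what the paper does implicitly; the only minor deviations are that in Part~2 you additionally invoke non-rainbowness of the $C_4$ inside $N(v)$ (the paper uses only the four $C_4$'s through $v$), and in Part~4 you needn't worry about the four arm lengths---only the first vertex on each arm enters the argument, so the paper's simpler setup (subdivide only the central edge, treat the arm neighbours as pendants) already covers every subdivision.
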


\begin{figure}[h]
\begin{center}
\begin{tikzpicture}

\filldraw (0,0) circle (0.05cm);
\filldraw (-0.8,1) circle (0.05cm);
\filldraw (0.8,1) circle (0.05cm);
\filldraw (-1.5,1.5) circle (0.05cm);
\filldraw (1.5,1.5) circle (0.05cm);
\draw (0,0) -- (-0.8, 1) -- (-1.5, 1.5);
\draw (0,0) -- (0.8, 1) -- (1.5, 1.5);
\draw (-0.8,1) -- (0.8, 1);

\filldraw (3,0) circle (0.05cm);
\filldraw (3,1.5) circle (0.05cm);
\filldraw (4.5,0) circle (0.05cm);
\filldraw (4.5,1.5) circle (0.05cm);
\draw (3,0) -- (4.5,0) -- (4.5, 1.5) -- (3,1.5) -- (3,0);

\filldraw (6.5,0) circle (0.05 cm);
\filldraw (7.5,0) circle (0.05 cm);
\filldraw (8.5,0) circle (0.05 cm);
\filldraw (6,0.75) circle (0.05 cm);
\filldraw (8,0.75) circle (0.05 cm);
\filldraw (6.5,1.5) circle (0.05 cm);
\draw (6.5,1.5) -- (6,0.75) -- (6.5, 0) -- (7.5,0) -- (8,0.75);
\draw[dashed] (6.5,1.5) to[bend left = 50] (8,0.75);
\draw (7.5,0) -- (8.5,0);

\filldraw (10,0) circle (0.05 cm);
\filldraw (10,1.5) circle (0.05 cm);
\filldraw (11, 0.75) circle (0.05 cm);
\filldraw (12, 0.75) circle (0.05 cm);
\filldraw (13,0) circle (0.05 cm);
\filldraw (13,1.5) circle (0.05 cm);
\draw (10,0) -- (11,0.75) -- (12, 0.75) -- (13,0);
\draw (10,1.5) -- (11,0.75);
\draw (13,1.5) -- (12,0.75);
\end{tikzpicture}
\caption{Subgraphs not appearing in $N(v)$}\label{fig:notInNv}
\end{center}

\end{figure}
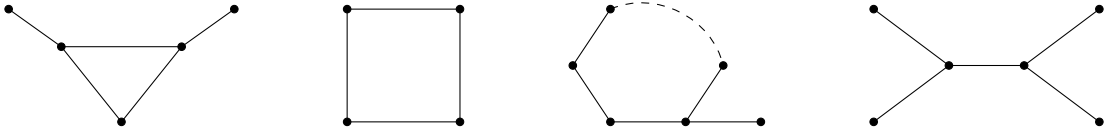

\begin{proof}

We show that if any of the above subgraphs appear in the neighborhood of $v$, then it is impossible to properly edge-color $G$ while avoiding a rainbow $C_4$-copy. 

We distinguish several cases based on $N(v)$.
\begin{enumerate}
    \item $N(v)$ contains a copy of $K_3$ with pendant edges from two vertices. 
    We label the vertices of this $K_3$ as $v_1,v_2,v_3$, and the degree $1$ vertices of the pendant edges as $v_4,v_5$. 
    We draw the configuration in Figure~\ref{forbidden1}; without loss of generality, $vv_i$ has color $i$.

    To avoid a rainbow $C_4$-copy using vertices $v,v_1,v_2,v_4$, either $c(v_1v_2) = 4$ or $c(v_2v_4) = 1$. 
    Similarly, to avoid a rainbow $C_4$-copy using vertices $v,v_3,v_2,v_4$, either $c(v_2v_3) = 4$ or $c(v_2v_4) = 3$. 
    Since $c(v_2v_4)$ cannot simultaneously be equal to $1$ and $3$, it follows that one of $c(v_1v_2),c(v_2,v_3)$ equals $4$. 
    Analogously, to avoid a rainbow $C_4$-copy using $v_5$, one of $c(v_1v_3), c(v_2,v_3)$ must equal $5$. 
    Thus, two edges of the triangle on $v_1,v_2,v_3$ form a $P_3$ colored with $4$ and $5$ which are not in $\{1,2,3\}$. 
    This cherry, along with $v$, immediately forms a rainbow $C_4$-copy.

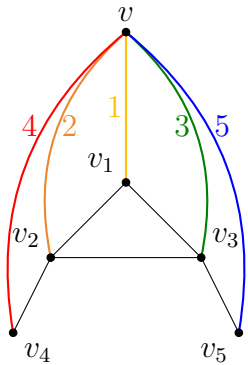
\begin{figure}
\centering
\begin{tikzpicture}

\draw[thick, color = amber] (0,0) -- (0,-2);
\draw[thick, color = amber] (-0.15,-1) node{1};
\draw[thick, color = cadmium] (0,0) to[bend right] (-1,-3);
\draw[thick, color = cadmium] (-0.75,-1.25) node{2};
\draw[thick, color = forest] (0,0) to[bend left] (1,-3);
\draw[thick, color = forest] (0.75,-1.25) node{3};
\draw[thick, color = red] (0,0) to[bend right] (-1.5,-4);
\draw[thick, color = red] (-1.28,-1.25) node{4};
\draw[thick, color = blue] (0,0) to[bend left] (1.5,-4);
\draw[thick, color = blue] (1.28,-1.25) node{5};

\filldraw (0,0) circle (0.05 cm) node[above]{$v$};

\filldraw (0,-2) circle (0.05 cm) node[above left]{$v_1$};
\filldraw (-1,-3) circle (0.05 cm) node[above left]{$v_2$};
\filldraw (1,-3) circle (0.05 cm) node[above right]{$v_3$};

\filldraw (-1.5,-4) circle (0.05 cm) node[below right]{$v_4$};
\filldraw (1.5,-4) circle (0.05 cm) node[below left]{$v_5$};

\draw (-1,-3) -- (-1.5,-4);
\draw (-1,-3) -- (1,-3) -- (1.5,-4);
\draw (1, -3) -- (0,-2);
\draw (0,-2) -- (-1,-3);

\end{tikzpicture}
\caption{$K_3$ and pendant edges in $N(v)$}\label{forbidden1}
\end{figure}

    \item $N(v)$ contains a copy of $C_4$, on vertices $v_1, v_2, v_3, v_4$. Without loss of generality, ${c(vv_i) = i}$; see Figure~\ref{forbidden2}.

    To avoid a rainbow $C_4$-copy using vertices $v,v_1,v_2,v_3$, either $c(v_1v_2) = 3$ or $c(v_2v_3) = 1$. 
    The situation is at this point symmetric, so without loss of generality, $c(v_2v_3) = 1$. 
    Now, we will have a rainbow $C_4$-copy using $v, v_2, v_3, v_4$ unless $c(v_3v_4) = 2$. 
    Similarly, we are forced to choose $c(v_1v_4) = 3$ and $c(v_1v_2) = 4$. But now $v_1,v_2,v_3,v_4$ forms a rainbow $C_4$-copy.

\begin{figure}[h]
\centering
\begin{tikzpicture}

\draw[thick, color = cadmium] (0,0) to[bend right] (-1,-3);
\draw[thick, color = cadmium] (-0.75,-1.25) node{2};
\draw[thick, color = amber] (0,0) to[bend left] (1,-3);
\draw[thick, color = amber] (0.75,-1.25) node{3};
\draw[thick, color = red] (0,0) to[bend right] (-1.5,-4);
\draw[thick, color = red] (-1.28,-1.25) node{1};
\draw[thick, color = forest] (0,0) to[bend left] (1.5,-4);
\draw[thick, color = forest] (1.28,-1.25) node{4};

\filldraw (0,0) circle (0.05 cm) node[above]{$v$};

\filldraw (-1,-3) circle (0.05 cm) node[above left]{$v_2$};
\filldraw (1,-3) circle (0.05 cm) node[above right]{$v_3$};

\filldraw (-1.5,-4) circle (0.05 cm) node[below right]{$v_1$};
\filldraw (1.5,-4) circle (0.05 cm) node[below left]{$v_4$};

\draw (-1,-3) -- (-1.5,-4);
\draw (-1,-3) -- (1,-3) -- (1.5,-4);
\draw(-1.5,-4) -- (1.5,-4);

\end{tikzpicture}
\caption{A copy of $C_4$ in $N(v)$}\label{forbidden2}
\end{figure}
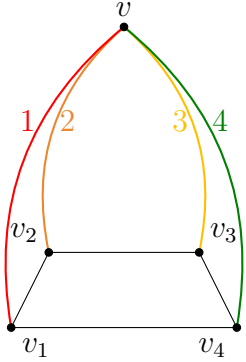

\item Let $k\geq 5$ and suppose that $N(v)$ contains a copy of $C_k$ with a pendant edge. 
We label the vertices of $C_k$ as $v_1, \dots, v_k$, and the endpoint of the pendant edge which is not contained in the cycle as $v_{k+1}$. 
Without loss of generality, $v_{k+1}$ is adjacent to $v_k$, and each edge $vv_i$ receives color $i$. 
For reference, we depict this in Figure~\ref{forbidden3}.

Observe that to avoid a rainbow $C_4$-copy using $v, v_1, v_k,$ and $v_{k+1}$, either $c(v_kv_{k+1}) = 1$ or $c(v_1v_k) = k+1$. Analogously, either $c(v_kv_{k+1}) = k-1$ or $c(v_{k-1}v_k) = k+1$. Without loss of generality, assume $c(v_kv_{k+1}) = k-1$ and $c(v_1v_k) = k+1$. Now, to avoid a rainbow $C_4$-copy using $v,v_k,v_1,v_2$, we must have $c(v_1v_2) = k$. 
Proceeding inductively, for each $i \in \{2, \dots k-1\}$, we must have $c(v_iv_{i+1}) = i-1$ to avoid a rainbow $C_4$-copy using $v,v_{i-1},v_i,v_{i+1}$. However, now $v, v_1,v_{k-1},v_k$ form a $C_4$-copy with edge colors $c(vv_1) = 1, c(v_1v_k) = k+1, c(v_kv_{k-1}) = k-2$, and $c(v_{k-1}v) = k-1$. Since $k > 3$, we must have that $1$ is distinct from $k-2, k-1, k+1$, so this $C_4$-copy is rainbow, a contradiction.

\begin{figure}[h]
\centering
\begin{tikzpicture}

\draw[thick, color = cadmium] (0,0) to[bend right] (-1,-3);
\draw[thick, color = amber] (0,0) to[bend left] (1,-3);
\draw[thick, color = red] (0,0) to[bend right] (-2,-4);
\draw[thick, color = red] (-2.08,-3) node{1};
\draw[thick, color = forest] (0,0) to[bend left] (2,-4);
\draw[thick, color = forest] (2.08,-3) node{4};
\draw[thick, color = blue] (0,0) to[bend right] (-1.5,-5);
\draw[thick, color = blue] (-1.48,-4) node{$k$};
\draw[thick, color = cadmium] (-0.92,-2.5) node{2};
\draw[thick, color = amber] (0.92,-2.5) node{3};
\draw[thick, color = byzantine] (0,0) to[bend right = 40] (-3,-7);
\draw[thick, color = byzantine] (-3.75,-4.5) node{$k+1$};

\filldraw (0,0) circle (0.05 cm) node[above]{$v$};

\filldraw (-1,-3) circle (0.05 cm) node[left]{$v_2$};
\filldraw (1,-3) circle (0.05 cm) node[right]{$v_3$};

\filldraw (-2,-4) circle (0.05 cm) node[left]{$v_1$};
\filldraw (2,-4) circle (0.05 cm) node[right]{$v_4$};

\filldraw (-1.5, -5) circle (0.05 cm) node[left]{$v_k$};

\filldraw (-3, -7) circle(0.05 cm) node[left]{$v_{k+1}$};

\draw (-3,-7) -- (-1.5,-5) -- (-2,-4) -- (-1,-3) -- (1,-3) -- (2,-4); 
\draw (-1.5,-5) -- (-0.7, -5.3);
\draw (2,-4) -- (1.7, -4.8);
\draw[dotted] (-0.7, -5.3) to[bend right] (1.7, -4.8); 


\end{tikzpicture}
\caption{A copy of $C_k$ with a pendant edge in $N(v)$}\label{forbidden3}
\end{figure}
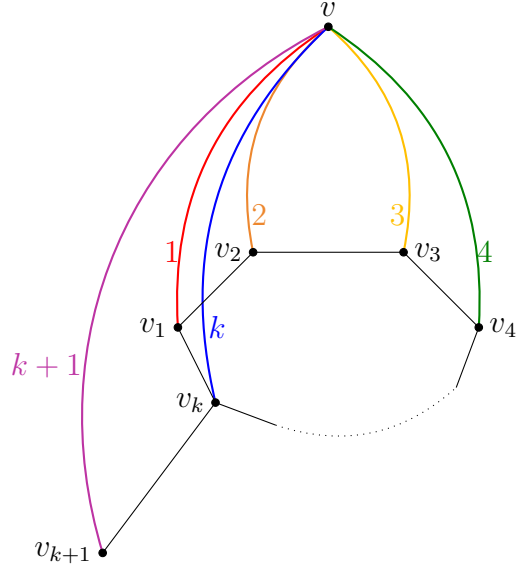

\item $N(v)$ contains $D_{2,2}$ or its subdivision. Thus, $N(v)$ contains a path of length $k$, for some $k \geq 2$, say on vertices $v_1, \dots, v_k$, as well as vertices $v_{k+1},v_{k+2}$ adjacent to $v_1$ and $v_{k+3},v_{k+4}$ adjacent to $v_k$. Without loss of generality, $c(vv_i) = i$ for all $i$. For reference, we depict this in Figure~\ref{forbidden4}.

\begin{figure}[h]
\centering

\begin{tikzpicture}

\draw[thick, color = amber] (0,0) to[bend right] (-2,-3);
\draw[thick, color = forest] (0,0) to[bend right] (-1,-3);
\draw[thick, color = blue] (0,0) to[bend left] (1,-3);
\draw[thick, color = periwinkle] (0,0) to[bend left] (2,-3);

\draw (4,-3) -- (2,-3);
\draw (-4,-3) -- (-2,-3);

\draw (2.5,-2) -- (2,-3);
\draw (-2.5,-2) -- (-2,-3);

\draw (-2,-3) -- (-1,-3);
\draw (2,-3) -- (1,-3);

\draw[thick, color = cadmium] (0,0) to[bend right] (-2.5,-2);

\draw[thick, color = byzantine] (0,0) to[bend left] (2.5,-2);

\draw[thick, color = red] (0,0) to[bend right] (-4,-3);

\draw[thick, color = brilliantrose] (0,0) to[bend left] (4,-3);

\draw[dotted] (-1,-3) -- (1,-3);

\filldraw (0,0) circle (0.05 cm);

\filldraw (-1,-3) circle (0.05 cm) node[below]{$v_2$};
\filldraw (1,-3) circle (0.05 cm) node[below]{$v_{k-1}$};
\filldraw (-2,-3) circle (0.05 cm) node[below]{$v_1$};
\filldraw (2,-3) circle (0.05 cm) node[below]{$v_k$};

\filldraw (-2.5,-2) circle (0.05 cm) node[below left]{$v_{k+2}$};
\filldraw (-4,-3) circle (0.05 cm)node[below]{$v_{k+1}$};
\filldraw (2.5,-2) circle (0.05 cm) node[below right]{$v_{k+4}$};
\filldraw (4,-3) circle (0.05 cm)node[below]{$v_{k+3}$};

\end{tikzpicture}

\caption{A subdivision of $D_{2,2}$ in $N(v)$} \label{forbidden4}

\end{figure}
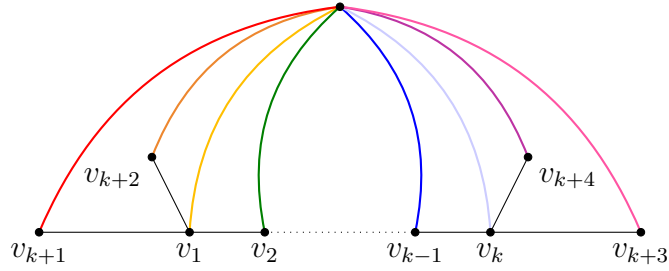

To avoid a rainbow $C_4$-copy, either $c(v_{k+1}v_1) = 2$ or $c(v_1v_2) = k+1$. 
Similarly, either $c(v_{k+2}v_1) = 2$ or $c(v_1v_2) = k+2$. 
Thus, we must have $c(v_1v_2) \in \{k+1, k+2\}$. 
Analogously, we must have $c(v_{k-1}v_k) \in \{k+3, k+4\}$. 
If $k = 2$, we have arrived at a contradiction already. 
If $k > 2$, then observe that to avoid a rainbow $C_4$-copy using $v,v_1,v_2,v_3$, since $c(v_1v_2) \neq 3$, we must have $c(v_2v_3) = 1$.
Continuing in this fashion, for each $i \in \{2, \dots, k-1\}$, we must have $c(v_iv_{i+1}) = i - 1$. 
However, we have already seen that $c(v_{k-1}v_k) \neq k-2$. 
Thus, it is impossible to color any subdivision of $D_{2,2}$ in $N(v)$ to avoid a rainbow $C_4$-copy.\qedhere
\end{enumerate}
\end{proof}

With the above lemma established, we can quickly prove that the following construction is properly rainbow $C_4$-saturated.

\begin{const}\label{const}
Suppose $n \equiv i \mod 6$ with $n \geq 7$.
For convenience, if $n$ is divisible by $6$, we shall set $i = 6$, not $i = 0$. 
Let $G_n$ be the graph consisting of a universal vertex $u$ whose neighborhood induces $\lfloor \frac{n-1}{6} \rfloor - 1$ copies of $S_{1,2,2}$ and one copy of $S_{1, \lceil \frac{6+i-2}{2} \rceil, \lfloor \frac{6+i-2}{2} \rfloor}$. 
\end{const}

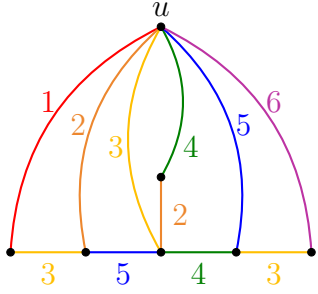
\begin{figure}[h]
\centering

\begin{tikzpicture}

\draw (0,0) node[above]{$u$};

\draw[thick, color = red] (0,0) to[bend right] (-2,-3);
\draw[thick, color = red] (-1.5,-1) node{1};
\draw[thick, color = cadmium] (0,0) to[bend right] (-1,-3);
\draw[thick, color = cadmium] (-1.1,-1.3) node{2};
\draw[thick, color = amber] (0,0) to[bend right] (0,-3);
\draw[thick, color = amber] (-0.6,-1.6) node{3};

\draw[thick, color = forest] (0,0) to[bend left] (0,-2);
\draw[thick, color = forest] (0.4,-1.6) node{4};
\draw[thick, color = blue] (0,0) to[bend left] (1,-3);
\draw[thick, color = blue] (1.1,-1.3) node{5};
\draw[thick, color = byzantine] (0,0) to[bend left] (2,-3);
\draw[thick, color = byzantine] (1.5,-1) node{6};

\draw[thick, color = cadmium] (0,-2) -- (0,-3) node[pos = 0.5, right]{2};

\draw[thick, color = amber] (-2,-3) -- (-1,-3) node[pos = 0.5, below]{3};
\draw[thick, color = amber] (2,-3) -- (1,-3) node[pos = 0.5, below]{3};

\draw[thick, color = blue] (-1,-3) -- (0,-3) node[pos = 0.5, below]{5};
\draw[thick, color = forest] (0,-3) -- (1,-3) node[pos = 0.5, below]{4};

\filldraw (0,0) circle (0.05 cm);

\filldraw (0,-2) circle (0.05 cm);
\filldraw (0,-3) circle (0.05 cm);
\filldraw (-1,-3) circle (0.05 cm);
\filldraw (1,-3) circle (0.05 cm);
\filldraw (-2,-3) circle (0.05 cm);
\filldraw (2,-3) circle (0.05 cm);

\end{tikzpicture}

\caption{A copy of $S_{1,2,2}$ in $N(u)$, colored to avoid a rainbow $C_4$-copy} \label{C4construction}

\end{figure}



\mainupper*

\begin{proof}
Observe that Construction~\ref{const} has $\frac{11}{6} n + O(1)$ edges.
Observe also that in Construction~\ref{const}, every copy of $C_4$ intersects precisely one component of the graph induced on $N(u)$.
Thus, to verify that a coloring of Construction~\ref{const} is rainbow-$C_4$-free, it suffices to verify that each component $C$ of the graph induced on $N(u)$ can be colored so that $\{u\} \cup V(C)$ is rainbow-$C_4$-free.
In Figure~\ref{C4construction}, given a copy of $S_{1,2,2}$ in $N(u)$ and colors for those edges incident to $u$, we exhibit a proper coloring of $S_{1,2,2}$ so that the graph induced on $\{u\} \cup V(S_{1,2,2})$ is rainbow-$C_4$-free. 
We can similarly, for any $i \in \{1, \dots, 6\}$, color a copy of $S_{1, \lceil \frac{6+i-2}{2} \rceil, \lfloor \frac{6+i-2}{2} \rfloor}$ in $N(u)$ so as to avoid a rainbow $C_4$-copy. 
Thus, Construction~\ref{const} admits a rainbow-$C_4$-free proper edge coloring.

We now show that Construction~\ref{const} is rainbow $C_4$-saturated. Since $u$ is a universal vertex, any edge added to the construction is contained within $N(u)$. Label the components of the graph induced on $N(u)$ as $C_1, C_2, \dots, C_k$. If an edge is added between $C_i$ and $C_j$, then, since $C_i$ and $C_j$ each contain a vertex of degree $3$, this added edge will either create a subdivision of $D_{2,2}$ within $N(u)$ or create a vertex $v \in N(u)$ with $|N(u) \cap N(v)| = 4$. By Lemma~\ref{traps} and Proposition~\ref{observations}, either outcome implies that the resulting graph does not admit a rainbow-$C_4$-free proper edge coloring.  If an edge is added between non-adjacent vertices of a single component $C_i$, then we can quickly verify that this addition will create either a triangle with pendant edges from two vertices, a copy of $C_4$, or a copy of $C_k$ with a pendant edge for some $k \geq 5$. In any case, by Lemma~\ref{traps}, the addition of an edge to $N(u)$ must yield a graph not admitting a rainbow-$C_4$-free proper edge coloring.
\end{proof}

We now work to show that Construction~\ref{const} is asymptotically best possible. To do so, we require some general understanding of the structure of rainbow $C_4$-saturated graphs. We begin with the following lemma, which shows that any rainbow $C_4$-saturated graph on sufficiently many vertices contains a dominating set which is either small or dense.

\begin{lemma}\label{dominating sets}

Fix $0 < \alpha < 1/9$. There exists $n_0 \in \mathbb{N}$ such that for any $n$-vertex, rainbow $C_4$-saturated graph $G$ with $n \geq n_0$, $G$ contains a dominating set $D$ such that either 

\begin{enumerate}
\item $|D| \leq \alpha n$, 

or 

\item $G[D]$ has average degree at least $4$.
\end{enumerate}

\end{lemma}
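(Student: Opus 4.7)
My plan is to combine Proposition~\ref{observations} (stating in particular that every two vertices are at distance at most three) with a BFS-layer argument. For any vertex $v$, write $L_i(v) := \{u : d(u,v) = i\}$, so the distance bound gives $V(G) = L_0(v) \cup L_1(v) \cup L_2(v) \cup L_3(v)$. A key preliminary observation is that $\{v\} \cup L_2(v)$ is always a dominating set: $v$ dominates $L_1(v)$, $L_2(v)$ is self-dominating, and each vertex of $L_3(v)$ has a BFS predecessor in $L_2(v)$.

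The argument splits into two cases. If some $v$ satisfies $|L_2(v)| + 1 \leq \alpha n$, take $D = \{v\} \cup L_2(v)$ to obtain conclusion~(1). Otherwise, every vertex has $|L_2(v)| \geq \alpha n - 1$. Since each vertex of $L_2(v)$ is reached from $v$ along a length-$2$ path through $L_1(v)$, one gets $|L_2(v)| \leq |L_1(v)|\,(\Delta(G) - 1) \leq \Delta(G)^2$, forcing $\Delta(G) \geq \sqrt{\alpha n} - O(1)$; the threshold $\alpha < 1/9$ is tailored so that this pushes $G$ into a high-degree regime where dense structure is unavoidable.

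In this regime, fix $v^*$ with $d(v^*) = \Delta(G)$ and propose $D := N[v^*] \cup L_3(v^*) = V(G) \setminus L_2(v^*)$, which is dominating because $L_2(v^*)$ is covered by $L_1(v^*) \subseteq N[v^*]$. The edges of $G[D]$ decompose into the $\Delta(G)$ edges from $v^*$ to $L_1(v^*)$, edges within $L_1(v^*)$, and edges within $L_3(v^*)$; the BFS distance constraint rules out all other $D$-internal edges. The target inequality is $|E(G[D])| \geq 2|D|$.

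The main obstacle is establishing this average degree bound, because Lemma~\ref{traps} actually \emph{limits} rather than enlarges the edge density inside $N(v^*)$: each component of $G[N(v^*)]$ contains at most one vertex of degree $\geq 3$ and no $C_4$, which bounds $|E(G[L_1(v^*)])|$ by roughly $\tfrac{3}{2}|L_1(v^*)|$. If $L_3(v^*)$ is too sparse to close the gap, I would modify $D$ by replacing $L_3(v^*)$ with a smaller set of hubs in $L_2(v^*)$ — vertices of $L_2(v^*)$ with many neighbors in $L_3(v^*)$ — which simultaneously shrinks $|D|$ and injects edges between $L_2(v^*)$ and $L_3(v^*)$ into $G[D]$. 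Balancing these quantities against $\alpha < 1/9$ is where the calculation becomes delicate, and is the step I expect to require the most care.
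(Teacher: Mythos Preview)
Your BFS-layer framework and the observation that $\{v\}\cup L_2(v)$ is dominating are correct, and Case~1 is fine. The gap is in Case~2, and it is not just a delicate calculation---the plan as stated does not close.

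First, rooting at a \emph{maximum}-degree vertex $v^*$ is the opposite of what the paper does and of what makes the argument tractable. The paper first disposes of $\delta(G)\geq 4$ trivially (take $D=V(G)$), then roots at a vertex $v$ with $d(v)\leq 3$, so that $|L_1|\leq 3$ and both $L_1\cup L_2$ and $L_1\cup L_3$ are candidate dominating sets with only one potentially large layer each. Your choice gives $|L_1(v^*)|=\Delta(G)$, which can be linear in $n$, so you are forced toward conclusion~(2) rather than (1)---but Lemma~\ref{traps} (as you note) caps the edge density in $G[L_1(v^*)]$, and nothing in your hypotheses forces $G[L_3(v^*)]$ to be dense. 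So $|E(G[D])|\geq 2|D|$ simply need not hold for $D=V\setminus L_2(v^*)$.

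Second, your proposed repair (``replace $L_3(v^*)$ by hubs in $L_2(v^*)$'') is where the real content of the lemma lives, and you have not supplied the mechanism that makes it work. The paper's key observations, which you do not use, are: (i) under the Case~2 hypothesis, \emph{every} vertex $u$ of degree at most a fixed constant $k$ has a neighbor of degree at least $\alpha n/k$ (otherwise $\{u\}\cup L_2(u)$ would be small); and (ii) the bound $|N(x)\cap N(y)|\leq 3$ from Proposition~\ref{observations} lets one show by inclusion-exclusion that any $O_\alpha(1)$ vertices of degree $\Omega_\alpha(n)$ already cover $n-O_\alpha(1)$ vertices, yielding a dominating set of size $\leq \alpha n$. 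The paper runs this on $L_3$: if $G[L_3]$ has average degree $\geq 5$ one gets conclusion~(2) for $L_1\cup L_3$; otherwise most of $L_3$ is low-degree, hence dominated by a bounded set $S$ of high-degree hubs, and $S\cup L_1$ together with the few high-degree $L_3$-vertices is a small dominating set. Finally, the constraint $\alpha<1/9$ is not about ``pushing into a high-degree regime'' as you suggest; it arises because each $u\in L_3$ has at most $9$ neighbors in $L_2$ (at most $3$ in each of the $\leq 3$ neighborhoods spanning $L_2$), which is needed to control how many $L_3$-vertices can have high degree.
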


\begin{proof}

  Fix $n_0 \geq \frac{6}{\alpha}\binom{11/\alpha^2}{2}$, and let $G$ be a rainbow $C_4$-saturated graph on $n \geq n_0$ vertices. By Proposition~\ref{observations} Case~\ref{obs:dist}, any two vertices $x,y \in V(G)$ must have $d(x,y) \leq 3$. 
Also note that if $\delta(G) \geq 4$, then $V(G)$ trivially forms a dominating set satisfying condition $2$. So we may assume that there exists $v \in V(G)$ with $d(v) \leq 3$, and for any $u \in V(G) \setminus\{v\}$, we have $d(u,v) \leq 3$. Denote by $L_i$ the set of vertices at distance exactly $i$ from $v$. Observe that both $L_1 \cup L_2$ and $L_1 \cup L_3$ are dominating sets, so we are done if one of $L_1 \cup L_2, L_1 \cup L_3$ satisfies one of conditions $1,2$. 

First, note that $L_1 \cup L_2$ will satisfy condition $1$ if $|L_2| \leq \alpha n - 3$. In particular, if each neighbor of $v$ has degree at most $\frac{\alpha}{3}n - 1$, then $L_1 \cup L_2$ satisfies condition $1$. Thus, we may assume that $v$ is adjacent to a vertex of degree at least $\frac{\alpha}{3}n$. In fact, by analogous reasoning, for a fixed constant $k$, we may assume that any vertex $u$ with $d(u) \leq k$ has a neighbor of degree at least $\frac{\alpha}{k}n$. Setting $k = \frac{11}{\alpha}$, each vertex of degree at most $\frac{11}{\alpha}$ has a neighbor of degree at least $\frac{\alpha^2}{11}n$. 

Now, consider $L_3$.
We may assume $|L_3| \geq \alpha n - 3$, else $L_1 \cup L_3$ satisfies condition $1$. This bound on $|L_3|$ implies that if $G[L_3]$ has average degree at least $5$, then $G[L_1 \cup L_3]$ has average degree at least $4$ and thus $L_1 \cup L_3$ satisfies condition $2$. 
So we may assume that $G[L_3]$ has average degree smaller than $5$. 
We shall use this assumption to find a small dominating set. 
First, note that if $u \in L_3$, then $N(u) \subset L_2 \cup L_3$. 
Moreover, by Proposition~\ref{observations}, common neighborhoods in $G$ are of size at most $3$. 
Thus, since $L_2$ is spanned by at most three neighborhoods, we have $|N(u) \cap L_2| \leq 9$. 
In particular, if $u \in L_3$ has $d(u) \geq \frac{11}{\alpha}$, then $u$ has at least $\frac{11}{\alpha} - 9 \geq \frac{10}{\alpha}$ neighbors in $L_3$ (since $\alpha < 1/9$). Thus, if more than $\frac{\alpha}{2} |L_3|$ vertices in $L_3$ have degree at least $\frac{11}{\alpha}$, then $G[L_3]$ has average degree at least $\frac{10}{\alpha} \cdot \frac{\alpha}{2} = 5$. So $L_3$ contains fewer than $\frac{\alpha}{2}|L_3| < \frac{\alpha}{2}n$ vertices of degree at least $\frac{11}{\alpha}$. We define
\[H \coloneqq \left\{u \in L_3 : d(u) \geq \frac{11}{\alpha}\right\},\]
noting that $H$ may be empty. Now, for $u \in L_3 \setminus H$, $u$ must have a neighbor of degree at least $\frac{\alpha^2}{11}n$. Let 
\[S \coloneqq \left\{x \in V(G) : d(x) \geq \frac{\alpha^2}{11}n \text{ and } N(x) \cap \left(L_3 \setminus H \right) \neq \emptyset \right\}.\]
We shall construct a dominating set using $S$. The manner in which we do so depends upon $|S|$.

\begin{enumerate}

\item $|S| \leq \frac{\alpha}{2}n - 3$ 

In this case, note that $S$ dominates $L_3 \setminus H$, so $S \cup H \cup L_1$ is a dominating set in $G$. Since $|L_1| \leq 3$ and $|H| \leq \frac{\alpha}{2}n$, we have $|S \cup H \cup L_1| \leq \alpha n$, satisfying condition $1$.

\item $|S| \geq  \frac{\alpha}{2}n - 3$

In this case, by choice of $n_0$, we have $|S| \geq \frac{11}{\alpha^2}$. Choose $S' \subset S$ with $|S'| = \frac{11}{\alpha^2}$. Since every vertex in $S'$ has degree at least $\frac{\alpha^2}{11}n$, we have 

\[ \left| \bigcup_{x \in S'} N(x)\right| \geq \frac{11}{\alpha^2} \cdot \frac{\alpha^2}{11}n - \sum_{x \neq y \in S'} |N(x) \cap N(y)|\]
by inclusion-exclusion. Since $|S'| = \frac{11}{\alpha^2}$ and $|N(x) \cap N(y)| \leq 3$ for any $x,y \in V(G)$ (by \Cref{observations} Case~\ref{obs:nighbr}), we have
\[ \left|\bigcup_{x \in S'} N(x)\right| \geq n - 3\binom{11/\alpha^2}{2}.\]
Thus, we can create a dominating set in $G$ by adding at most $3\binom{11/\alpha^2}{2}$ vertices to $S'$. By choice of $n_0$, the resulting dominating set is of size at most $\alpha n$, satisfying condition~$1$.
\end{enumerate}
\end{proof}

Using Lemma~\ref{dominating sets}, we can now show that the upper bound given in Theorem~\ref{c4 upper bound} is asymptotically best possible. 




\mainlower*

\begin{proof}

Given $\varepsilon > 0$, we choose $n_0$ as given by applying Lemma~\ref{dominating sets} with $\alpha = \frac{5 \varepsilon}{11}$. Let $G$ be a rainbow $C_4$-saturated graph on vertex set $V$, with $|V| = n  \geq n_0$, and let $D$ be a dominating set as guaranteed by Lemma~\ref{dominating sets}. Thus, either $|D| < \frac{5 \varepsilon}{11} n$ or $G[D]$ has average degree at least~$4$. We shall estimate $|E(G)|$ by considering $G[V \setminus D]$. For a component $C$ of $G[V \setminus D]$, let $f(C)$ be the number of edges incident to $C$. Observe that any component $C$ is incident to at least $|V(C)| -1$ edges within $G[V \setminus D]$ and at least $|V(C)|$ edges which are incident to a vertex of $D$. We shall call a component $C$ of $G[V \setminus D]$ \textit{sparse} if $f(C) = 2|V(C)| - 1$, and \textit{dense} if $f(C) \geq 2|V(C)|$. We let $\mathcal{C}$ be the set of components of $G[V \setminus D]$, and partition $\mathcal{C} = \mathcal{C}_d \cup \mathcal{C}_s$, where $\mathcal{C}_d$, $\mathcal{C}_s$ are the sets of dense and sparse components, respectively, of $G[V\setminus D]$. We thus have 

\begin{align}
|E(G)| &= |E(G[D])| + \sum_{C \in \mathcal{C}} f(C)  \geq |E(G[D])| + \sum_{C \in \mathcal{C}_d} 2|V(C)| + \sum_{C \in \mathcal{C}_s} 2|V(C)| - 1 \nonumber\\
       &\geq |E(G[D])| + 2|V \setminus D| - |\mathcal{C}_s|\label{eq:edgeDensity}
\end{align}

We now bound $|\mathcal{C}_s|$ by showing that sparse components are usually not too small. Our eventual goal is to bound the number of components $C \in \mathcal{C}_s$ with $|V(C)| < 6$. We begin by making some elementary observations about the elements of $\mathcal{C}_s$, which will be repeatedly useful. Note that if $C \in \mathcal{C}_s$, then there are precisely $|V(C)|$ edges between $V(C)$ and $D$; thus, each member of $V(C)$ has exactly one neighbor in $D$. Also note that if $C \in \mathcal{C}_s$, then $|E(C)| = |V(C)| - 1$, and thus $C$ is a tree. Also, by Proposition~\ref{observations} Case~\ref{obs:deg1}, $G$ contains at most one vertex of degree $1$, so for all but at most one component $C \in \mathcal{C}_s$, $C$ contains at least one edge. In particular, all but one $C \in \mathcal{C}_s$ must contain at least two vertices which have degree~$1$ in $G[V \setminus D]$, and, by the previous observation, these vertices have degree exactly $2$ in $G$.

Define 
\[ L \coloneqq \left\{ v \in \bigcup_{C \in \mathcal{C}_s} V(C) : d_G(v) = 2 \right\}. \]
We begin by showing that $D$ contains a small set whose neighborhoods either contain $L$ or contain all sparse components of $G[V \setminus D]$ of size less than $6$.

\begin{claim}\label{core nbhds}
  There exists a set $S \subseteq D$ with $|S| \leq 35$ such that either
  \begin{enumerate}
    \item $L \subseteq N(S)$,
    
    or
    
    \item for every $C \in \mathcal{C}_s$ with $V(C) \not\subseteq N(S)$, $|C\cap N(S)|\ge 5$.
  \end{enumerate}
\end{claim}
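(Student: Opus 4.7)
The plan is to construct $S$ via case analysis on the size of $\phi(L)$, where $\phi: L \to D$ sends each $v \in L$ to its unique $D$-neighbor (which exists because $v$ lies in a sparse component, where every vertex of $V(C)$ has exactly one neighbor in $D$). First I would handle the easy case: if $|\phi(L)| \le 35$, I take $S = \phi(L) \subseteq D$. Then every $v \in L$ satisfies $\phi(v) \in S$, so $v \in N(\phi(v)) \subseteq N(S)$; hence $L \subseteq N(S)$ and condition~(1) is satisfied.

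In the case $|\phi(L)| > 35$, I would aim for condition~(2), building $S$ greedily starting from $S = \emptyset$. At each step, I search for a sparse component $C$ that is \emph{problematic}, meaning $V(C) \not\subseteq N(S)$ and $|V(C) \cap N(S)| < 5$, and add some $d \in \Phi(C) \setminus S$ to $S$, where $\Phi(C) := \{\phi(v) : v \in V(C)\}$. I would choose $d$ to maximize progress across all sparse components at once, for instance by taking $d$ that covers the greatest number of previously uncovered vertices lying in currently problematic components. The process terminates precisely when no problematic component remains, at which point condition~(2) holds by definition.

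To bound $|S| \le 35$, I would appeal to the structural constraints: Proposition~\ref{observations} Part~\ref{obs:nighbr} (any two vertices have at most three common neighbors) and Lemma~\ref{traps} (the forbidden configurations in $N(d)$ for any $d$). These together restrict how sparse components can share $D$-neighbors; in particular, since $G[N(d) \cap V(C)]$ is a subgraph of the tree $C$ and cannot contain a subdivision of $D_{2,2}$, each of its connected pieces must be a spider. The main obstacle is turning these qualitative constraints into the precise quantitative bound of $35$ iterations. This likely requires an accounting that separates components of size at most $5$ (for which condition~(2) forces $V(C) \subseteq N(S)$, so all of $\Phi(C)$ must be added, bounded by roughly $5$ additions per such component) from larger components (where it suffices to reach the threshold of $5$ covered vertices), and uses the hypothesis $|\phi(L)| > 35$ together with the common-neighborhood bound to argue that many leaves of small components must share $D$-neighbors with leaves of other components, so that a bounded number of well-chosen $d \in D$ simultaneously resolve many problematic components.
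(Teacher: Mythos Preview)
Your first case ($|\phi(L)| \le 35$, take $S=\phi(L)$) is fine. The genuine gap is in the second case: the greedy scheme you describe cannot be bounded by $35$ (or any constant) using only Proposition~\ref{observations} and Lemma~\ref{traps}. Those results constrain the structure \emph{inside} a single neighborhood $N(d)$, but they do nothing to prevent, say, arbitrarily many sparse components whose vertices have pairwise disjoint $D$-neighbors. In that situation your greedy step resolves at most one component at a time, and you would need $\Omega(|\mathcal{C}_s|)$ iterations. Your suggestion that ``$|\phi(L)|>35$ together with the common-neighborhood bound'' forces leaves of different components to share $D$-neighbors is backwards: $|\phi(L)|>35$ says precisely that the leaves' $D$-neighbors are \emph{spread out}, not concentrated.

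The idea you are missing is the use of \emph{saturation} rather than forbidden substructures. The paper does not greedily cover vertices; instead, in each of five rounds it chooses a pair $v_{2i-1},v_{2i}\in D$ (each with an $L$-neighbor $w_{2i-1},w_{2i}$) and adds to $S$ the at most seven vertices of $\bigl(\{v_{2i-1},v_{2i}\}\cup (N(v_{2i-1})\cap N(v_{2i}))\cup N(N(\{w_{2i-1},w_{2i}\})\setminus D)\bigr)\cap D$. The point is then a saturation argument: if some sparse component $C$ avoids $N(\{v_{2i-1},v_{2i}\})$ and has a vertex $x$ whose unique $D$-neighbor lies outside the added set, one checks that adding the edge $xw_{2i-1}$ or $xw_{2i}$ creates \emph{no} copy of $C_4$ at all, contradicting that $G$ is rainbow $C_4$-saturated. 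Hence every $C\in\mathcal{C}_s$ not yet contained in $N(S)$ must meet $N(v_{2i-1})\cup N(v_{2i})$ in a new vertex at each round, and after five rounds $|V(C)\cap N(S)|\ge 5$. This is what produces a universal constant; your structural constraints alone cannot.
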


\begin{proof}[Proof of Claim~\ref{core nbhds}]
  \let\qed\relax

  We describe a procedure for building such a set $S$. Fix $v_1,v_2 \in D$ such that there exist $w_1 \in N(v_1)\setminus D, w_2 \in N(v_2)\setminus D$ with $w_1,w_2 \in L$. (If such a pair $v_1,v_2$ does not exist, then, since $L\subseteq \cup_{C \in \mathcal{C}_s}V(C)$, either $\mathcal{C}_s$ is empty and we return $S = \emptyset$ or there is a single vertex $v \in D$ such that $L \subseteq N(v)$ and we return $S = \{v\}$.) We set $S_1\subseteq D$ as:
\[S_1\coloneqq \Big(\{v_1,v_2\} \cup \left(N(v_1)\cap N(v_2)\right)\cup N\bigl(N(\{w_1,w_2\})\setminus D\bigr)\Big)\cap D.\]

We claim that every component $C \in \mathcal{C}_s$ is either contained in $N(S_1)$, or else intersects one of $N(v_1), N(v_2)$.
Indeed, suppose $C \in \mathcal{C}_s$ and $x \in V(C)$ such that $x \not \in N(S_1)$. That is, the unique element $u \in N(x) \cap D$ has $u \not\in S_1$.
We must show that $C$ intersects $N(\{v_1,v_2\})$. Indeed, suppose that it does not. Then $w_1,w_2 \not\in V(C)$ as well as $x$ has no neighbor in $N(\{v_1,v_2\}) \setminus D$, and in particular, $xw_1, xw_2 \not\in E(G)$. Note that, since $w_1,w_2 \not\in V(C)$, the addition of either $xw_1$ or $xw_2$ to $E(G)$ creates no copy of $C_4$ in $G[V \setminus D]$. Moreover, since $N(\{w_1,w_2\}) \cap (N(u) \setminus D) = \emptyset$, the addition of either edge cannot create a copy of $C_4$ using three vertices from $V \setminus D$ and $u$. Similarly, since $x$ has no neighbor in $N(\{v_1,v_2\}) \setminus D$, the addition of either edge cannot create a copy of $C_4$ using three vertices from $V\setminus D$ and either $v_1$ or $v_2$. Thus, the only possible copy of $C_4$ created by adding $xw_i$ uses two vertices from $D$, and these must be $u$ and $v_i$, the only neighbour in $D$ of $x$ and $w_i$, respectively. Since $u \not\in N(v_1)\cap N(v_2)$, we can therefore add one of $xw_1,xw_2$ without creating a copy of $C_4$, a contradiction. 




Since $|N(v_1)\cap N(v_2)| \leq 3$ by Proposition~\ref{observations} Case~\ref{obs:nighbr}, and since $w_1,w_2$ each have exactly one neighbor in $V\setminus D$, we have $|S_1| \leq 7$. As we have argued, any component $C \in \mathcal{C}_s$ which is not contained in $N(S_1)$ must intersect $N(S_1)$ once, either in $N(v_1)$ or $N(v_2)$.

Now, we wish to repeat this argument by selecting $v_3,v_4 \not\in S_1$ such that $N(v_3)\setminus D$ and $N(v_4)\setminus D$ contain vertices $w_3,w_4 \in L$, respectively. If it is not possible to do so, then either $L \subseteq N(S_1)$,  or there exists precisely one vertex $v \in D\setminus S_1$ such that $N(S_1 \cup \{v\}) \supseteq L$. In these cases, we either return $S = S_1$ or $S = S_1 \cup \{v\}$. If we can find $v_3,v_4$ as desired, we set $S_2\subseteq D$ as:  
\[S_2\coloneqq  \Big(S_1 \cup \{v_3,v_4\} \cup \left(N(v_3)\cap N(v_4)\right)\cup N\bigl(N(\{w_3,w_4\})\setminus D\bigr)\Big)\cap D.\]
%
%

Repeating the above argument, any $C \in \mathcal{C}_s$ which is not contained in $N(S_2)$ must contain a vertex from either $N(v_3) \setminus D$ or $N(v_4)\setminus D$, and thus intersects $N(S_2)$ at least twice. (We again use the fact that, since $C$ is sparse, each vertex of $C$ has exactly one neighbor in $D$; thus, if $C$ intersects $N(v_i)$ and $N(v_j)$, it must do so in two distinct vertices.) Note also that $|S_2| \leq 14$. Proceeding in this fashion, we either return a set $S$ of size at most $7\cdot 4 + 1 = 29$ with $L \subseteq N(S)$, or we return $S_5$ containing disjoint pairs $\{v_{2i-1}, v_{2i}\}$ ($k \in \{1, \dots, 5\}$) with the property that any $C \in \mathcal{C}_s$ which is not contained in $ N(S_5)$ intersects one of $N(v_{2i - 1})\setminus D, N(v_{2i}) \setminus D$ for each $i \in \{1,\dots,5\}$. Here, we note that $|S_5| \leq 35$, and set $S = S_5$. 
\hfill $\diamondsuit$
\end{proof}

We shall call the elements of $S$ the \textit{core vertices} of $D$, and say that the sets $N(v)\setminus D$ with $v \in S$ are the \textit{core neighborhoods}. As we shall argue, to understand the sizes of components $C \in \mathcal{C}_s$, it essentially suffices to understand the sizes of components in the subgraphs of $G$ induced on the core neighborhoods. Towards such an argument, we also require the following claim, which states that for any $v \in D$, few members of $L \cap N(v)$ are in very small components of $G[V \setminus D]$.

\begin{claim}\label{L comps}
Fix $v \in D$ and let $\mathcal{C}_v$ be the set of components of $G[N(v) \setminus D]$ which intersect $L$. Then at most two components $C \in \mathcal{C}_v$ have $|V(C)| < 3$. Moreover, let $\mathcal{C}_v'$ be the set of components in $\mathcal{C}_v$ which are also components of $G[V \setminus D]$. Then no component $C \in \mathcal{C}_v'$ has $3 \leq |V(C)| < 6$.

\end{claim}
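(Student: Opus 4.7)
The proof plan for both parts is to contradict the saturation hypothesis: if the structure forbidden by the claim existed, one could find a non-edge $e$ of $G$ together with a proper rainbow-$C_4$-free coloring of $G + e$, obtained by a small local modification of the fixed rainbow-$C_4$-free coloring $c_0$ of $G$.

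\textbf{Part 1.} Assume three components $C_1, C_2, C_3 \in \mathcal{C}_v$ have $|V(C_i)| < 3$ and fix $x_i \in V(C_i) \cap L$. Then $d_G(x_i) = 2$ with $N(x_i) = \{v, y_i\}$, where $y_i \in N(v)$ iff $|V(C_i)| = 2$. For any $i \neq j$, every 4-cycle through $x_ix_j$ in $G + x_ix_j$ has the form $x_ix_j$--$z$--$w$--$x_i$ with $z\in N(x_j)$, $w\in N(x_i)$, $zw\in E(G)$, so (since $N(x_i), N(x_j)$ each have only two elements) the only possibilities are $x_ix_jvy_i$, $x_ix_jy_jv$, and $x_ix_jy_jy_i$, requiring $y_i\in N(v)$, $y_j\in N(v)$, or $y_iy_j\in E(G)$ respectively. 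The main subcase is three singletons: every $y_i\notin N(v)$, so adding $x_ix_j$ creates a new 4-cycle only if $y_iy_j\in E$, and otherwise $c_0$ extends trivially to $G + x_ix_j$, contradicting saturation. Hence all three pairs $y_iy_j$ lie in $E$, forcing $\{y_1, y_2, y_3\}$ to span a triangle inside one sparse component $C_i^*$ of $G[V\setminus D]$; since $C_i^*$ is a tree, this is a contradiction. The mixed subcases (some $|V(C_i)| = 2$) follow the same template: the small number of new 4-cycles through $x_ix_j$ can typically be killed by setting $c(x_ix_j)$ equal to a suitable $c_0$-color of a non-incident edge, and the colorings of $c_0$ that would block this extension for all three pairs $(i,j)$ simultaneously are shown to violate either sparsity or Proposition~\ref{observations}.

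\textbf{Part 2.} Suppose $C \in \mathcal{C}_v'$ with $|V(C)| \in \{3, 4, 5\}$. Then $C$ is a sparse component of $G[V\setminus D]$ contained in $N(v)\setminus D$, so $C$ is a tree in which every vertex has $v$ as its unique $D$-neighbor, meaning $N_G(u) \subseteq V(C) \cup \{v\}$ for every $u \in V(C)$. Consequently, every 4-cycle of $G$ or of $G+e$ (with $e \in \binom{V(C)}{2}$) that contains any edge of $E(C) \cup \{e\}$ lies entirely in $\{v\} \cup V(C)$, so we may recolor the edges of $E(C)\cup\{e\}$ freely while preserving the colors $c_0(vu)$ for $u \in V(C)$, and this pastes back into $c_0$ without disturbing any other 4-cycle. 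Proposition~\ref{observations} case~\ref{obs:nighbr} restricts $C$ to have maximum degree at most $3$, so $C$ is one of $P_3$, $P_4$, $K_{1,3}$, $P_5$, or the five-vertex tree with degree sequence $(3,2,1,1,1)$.

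For each shape I plan to choose a specific non-edge $e \in \binom{V(C)}{2}$ (e.g.\ $e = ac$ for $P_3 = abc$ and for $P_4 = abcd$, $e = ac$ for $K_{1,3}$ with leaves $a,c,d$, $e = ae$ for $P_5 = abcde$, and $e = ad$ for the $Y$-tree) and re-color $E(C)\cup\{e\}$ using only the distinct colors $\{c_0(vu) : u \in V(C)\}$ in a shifted pattern; e.g.\ for $P_4 = abcd$, set $c(ab) = c_0(vc), c(bc) = c_0(va), c(cd) = c_0(vb), c(ac) = c_0(vd)$. A direct check verifies that the local coloring is proper and that every 4-cycle in $\{v\} \cup V(C)$ (of which there are at most five) has two opposite edges of equal color, hence is non-rainbow. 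Substituting this local coloring into $c_0$ produces a proper rainbow-$C_4$-free coloring of $G + e$, contradicting saturation.

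\textbf{Main obstacle.} The bulk of the proof is the case-by-case verification. In Part 1, the mixed subcases require careful tracking of which of the few possible new 4-cycles arise through each candidate edge $x_ix_j$ and of which equalities among $c_0$-colors would simultaneously block extensions for all three pairs $(i,j)$; in Part 2, one must verify for each of the five tree shapes that the chosen new edge creates only 4-cycles confined to $\{v\} \cup V(C)$ and that the shifted local coloring is simultaneously proper and defeats every such 4-cycle. The essential simplification in both parts is that the sparse-tree structure confines the relevant 4-cycles to small local patches, so the verification remains finite and mechanical.
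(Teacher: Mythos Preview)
Your Part~2 follows the paper's approach, and the localization of all relevant $4$-cycles to $\{v\}\cup V(C)$ is correct. One specific choice fails, however: for the $Y$-tree with path $a\text{--}b\text{--}c$ and further leaves $d,e$ at $c$, adding $ad$ admits no valid coloring. The three cycles $vbcdv$, $vbcev$, $vdcev$ force $\{c(bc),c(cd),c(ce)\}$ to equal $\{c_0(vb),c_0(vd),c_0(ve)\}$ in one of two patterns, and then the cycles $vabcv$ and $vadcv$ each force $c(ab)=c_0(vc)$ and $c(ad)=c_0(vc)$, which is improper at $a$. The paper instead adds the edge between the two leaves adjacent to the degree-$3$ vertex, and then a coloring exists.

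For Part~1 your route differs from the paper's. The paper proves the stronger pair of statements that $\mathcal{C}_v$ contains at most one component of size~$1$ and at most one of size~$2$, each by a direct two-component argument; you assume three small components and try to play the three candidate edges $x_ix_j$ against one another. Your all-singletons subcase is clean and correct (and arguably nicer than the paper's two-singleton argument). But the subcases with a size-$2$ component are a real gap: merely choosing $c(x_ix_j)$ does not suffice. If all three $C_i$ have size two, then adding $x_ix_j$ always creates the two $4$-cycles $x_ix_jvy_i$ and $x_ix_jy_jv$; when the opposite-edge coincidences $c_0(x_jv)=c_0(x_iy_i)$ and $c_0(x_jy_j)=c_0(vx_i)$ both fail, one is forced to $c(x_ix_j)=c_0(vy_i)$ by the first cycle and $c(x_ix_j)=c_0(vy_j)$ by the second, which is impossible. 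Since the edges $vx_i$ and $x_iy_i$ lie in no $4$-cycle of $G$ (because $N(v)\cap N(y_i)=\{x_i\}$), their $c_0$-colors are unconstrained, so this blocking can occur for all three pairs simultaneously with no violation of sparsity or of Proposition~\ref{observations}. The paper's remedy is to \emph{recolor}: reset $c(vx_1)=c(x_2y_2)=a$ and $c(vx_2)=c(x_1y_1)=b$ for two fresh colors $a,b$ (legitimate precisely because those four edges sit in no $4$-cycle of $G$), after which any third color on $x_1x_2$ kills both new cycles. Your proposal needs this recoloring idea, or something equivalent, for the size-two subcases; the three-pair pigeonhole alone does not close the argument.
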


\begin{proof}[Proof of Claim~\ref{L comps}]
  \let\qed\relax

First, we show that there cannot be two components $C_1, C_2 \in \mathcal{C}_v$ of size $1$. Indeed, suppose $w_1, w_2 \in L$ are isolated vertices in $G[N(v) \setminus D]$. We show that we can add the edge $w_1w_2$ to $G$ without forcing a rainbow $C_4$-copy, a contradiction since $G$ is rainbow $C_4$-saturated. We have two cases.

\begin{enumerate} 

  \item $|N(w_1)\cap N(w_2)| = 2$

In this case, observe that the addition of $w_1w_2$ to $G$ creates no new $C_4$-copies; thus, we may add $w_1w_2$ freely.

\item $|N(w_1)\cap N(w_2)| = 1$

In this case, let $u_1,u_2$ be the neighbors of $w_1,w_2$, respectively, in $G \setminus D$. Now, observe that the addition of $w_1w_2$ will create no copy of $C_4$ if $u_1u_2 \not\in E(G)$, and will create precisely one copy of $C_4$ if $u_1u_2 \in E(G)$, namely $w_1 w_2 u_2 u_1 w_1$. 
Thus, we suppose that $u_1u_2 \in E(G)$. Consider a rainbow $C_4$-free edge-coloring of $G$. 
We begin by maintaining this edge-coloring of $G$ while adding the edge $w_1w_2$, and seek to select $c(w_1w_2)$ so that the resulting edge-coloring of $G + w_1w_2$ is proper and $w_1 w_2 u_2 u_1 w_1$ is not rainbow. 

Note first that if $w_1 u_1 u_2 w_2$ is not a rainbow copy of $P_3$, then we may choose $c(w_1w_2)$ freely. Thus, we may assume that $w_2 u_2 u_1 w_1$ \textit{is} a rainbow copy of $P_3$, say with $c(w_1u_1) = 1, c(u_1u_2) = 2$, and $c(u_2w_2) = 3$. Thus, we wish to choose $c(w_1w_2) = 2$. If we cannot do so, then without loss of generality, $c(w_1v) = 2$. We now show that if $c(w_1v) = 2$, then we can change $c(w_1v)$ without creating a rainbow $C_4$-copy, allowing the selection $c(w_1w_2) = 2$. Clearly, if $w_1v$ is not in any $C_4$-copy, then we may freely recolor $w_1v$. If $w_1v$ is in a $C_4$ copy, then note that this $C_4$ copy must span $v, w_1,u_1$ and a neighbor of $w_1,u_1$, say $x$. We illustrate the configuration in Figure~\ref{uniquec4}, in addition to known adjacencies and colors between $v, w_1,w_2,u_1,$ and $u_2$.

\begin{figure}[h]

\begin{center}
\begin{tikzpicture}

\draw[thick, color = red]  (-1,-2) -- (-1,-4) node[pos = 0.5, left]{$1$};  

\draw[thick, color = cadmium]  (-1,-4) -- (1,-4) node[pos = 0.5, below]{$2$};

\draw[thick, color = amber]  (1,-4) -- (1,-2) node[pos = 0.5, right]{$3$}; 

\draw[thick, color = cadmium]  (0,0) -- (-1,-2) node[pos = 0.5, left]{$2$}; 

\filldraw (0,0) circle (0.05 cm) node[above]{$v$};

\filldraw (-1,-2) circle (0.05 cm) node[left]{$w_1$};
\filldraw (1,-2) circle (0.05 cm) node[right]{$w_2$};

\filldraw (-1,-4) circle (0.05 cm) node[below]{$u_1$};
\filldraw (1,-4) circle (0.05 cm) node[below]{$u_2$};

\filldraw (-3, -2) circle (0.05 cm) node[left]{$x$};

\draw (0,0) -- (1, -2);
\draw (0,0) to[bend right] (-3,-2) to[bend right] (-1,-4);

\end{tikzpicture}
\caption{A copy of $C_4$ using $w_1v$}\label{uniquec4}
\end{center}

\end{figure}
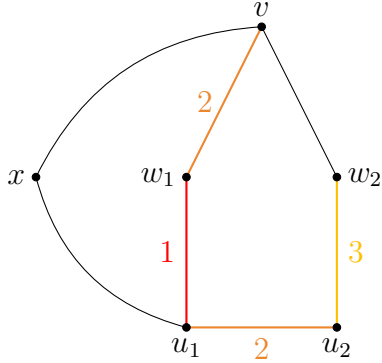

Note that $c(u_1x) \neq 2$, so since $vw_1u_1xv$ is not rainbow, we must have $c(vx) = 1$. Thus, $vw_1u_1xv$ is in fact the unique copy of $C_4$ containing $vw_1$, and will not be made rainbow by altering $c(vw_1)$. Therefore, if $c(vw_1) = 2$, then we alter the coloring of $G$ by setting $c(vw_1)$ to a new color, and we are able to add $w_1w_2$ with $c(w_1w_2) = 2$.

\medskip
\end{enumerate}
Next, we show that there cannot be two components $C_1, C_2 \in \mathcal{C}_v$ of size $2$. Indeed, suppose that two such components exist, and say $V(C_1) = \{w_1,u_1\}$ and $V(C_2) = \{w_2,u_2\}$, where $w_1,w_2 \in L$. Note that although $u_1,u_2$ may have high degree in $G$, each has exactly one neighbor in $N(v)$.

Observe that neither $vw_1$ nor $w_1u_1$ is contained in any $C_4$-copy in $G$. Indeed, since $d(w_1) = 2$, a $C_4$-copy in $G$ containing $w_1$ would necessarily include both $v$ and $u_1$, with a fourth vertex in $N(v)\cap N(u_1)$. However, since $N(v)\cap N(u_1) = \{w_1\}$, it is impossible to select this fourth vertex. Analogously, neither $vw_2$ nor $w_2u_2$ is contained in any $C_4$-copy in $G$.

Now, we claim that we can add the edge $w_1w_2$ to $G$ without forcing a rainbow $C_4$-copy. Note that the addition of $w_1w_2$ creates precisely two $C_4$-copies: $vw_1w_2u_2v$ and $vw_2w_1u_1v$. We shall color $G + w_1w_2$ as follows. Begin with a rainbow $C_4$-free proper edge-coloring of $G$. Since $vw_1, vw_2, w_1u_1,$ and $w_2u_2$ are not contained in any $C_4$-copies in $G$, we can modify this edge-coloring of $G$ as follows: choose colors $a,b$ which do not appear in the chosen edge-coloring of $G$. We re-color $vw_1$ and $w_2u_2$ with color $a$ and $vw_2, w_1u_1$ with color $b$. This modification cannot create a rainbow $C_4$-copy in $G$, since the re-colored edges do not appear in any $C_4$-copies in $G$; moreover, the new edge-coloring of $G$ remains proper by the selection of $a,b$. Now, we add $w_1w_2$ to $G$, choosing $c(w_1,w_2) \not\in \{a,b\}$. Note that neither $vw_1w_2u_2v$ nor $vw_2w_1u_1v$ is rainbow under the chosen coloring.

Finally, we show that there are no components in $\mathcal{C}_v'$ on $3,4,$ or $5$ vertices. 
Observe, for $C \in \mathcal{C}_v'$, the only copies of $C_4$ which intersect $V(C)$ are contained in $G[V(C) \cup \{v\}]$. 
Thus, we may add edges between vertices in $V(C)$ and change the colors of edges in $E(C)$ in any fashion we like, so long as the resulting changes do not result in a rainbow $C_4$-copy in $G[V(C) \cup \{v\}]$. 
We consider all possible components $C \in \mathcal{C}_v'$ with $|V(C)| \in \{3,4,5\}$, and show that we may add an edge $e$ to each and color $C + e$ to avoid a rainbow $C_4$-copy in $G[V(C) \cup \{v\}]$. 
In each case, we shall assume that $V(C) = \{v_1, \dots, v_i\}$, and that $c(vv_i) = i$.

\begin{enumerate}
\item $|V(C)| = 3$.

In this case, $C$ is the unique $3$-vertex tree, $P_3$. Say $v_1,v_3$ are the leaves of $C$. We may add the edge $v_1v_3$ and color $C + v_1v_3$ to avoid a rainbow $C_4$-copy. We depict this addition and an admissible coloring in Figure~\ref{case 1}.

\begin{figure}[h]
\begin{center}

\begin{tikzpicture}

\draw[thick, color = red] (0,0) -- (-2,-2);
\draw[thick, color = cadmium] (0,0) -- (0,-2);
\draw[thick, color = amber] (0,0) -- (2,-2);
\draw[thick, color = red] (-1.3,-1) node{$1$};
\draw[thick, color = cadmium] (0,-1)node[left]{$2$};
\draw[thick, color = amber] (1.3,-1) node{$3$};

\filldraw (0,0) circle (0.05 cm) node[above]{$v$};

\filldraw (-2,-2) circle (0.05 cm) node[below left]{$v_1$};
\filldraw (0,-2) circle (0.05 cm) node[below]{$v_2$};
\filldraw (2,-2) circle (0.05 cm) node[below right]{$v_3$};
\draw (-2,-2) -- (2,-2);

\draw[thick, color = red] (6,0) -- (4,-2);
\draw[thick, color = cadmium] (6,0) -- (6,-2);
\draw[thick, color = amber] (6,0) -- (8,-2);
\draw[thick, color = red] (4.7,-1) node{$1$};
\draw[thick, color = cadmium] (6,-1)node[left]{$2$};
\draw[thick, color = amber] (7.3,-1) node{$3$};

\draw[thick, color = amber] (4,-2) -- (6,-2) node[pos = 0.5,below]{$3$};
\draw[thick, color = red] (6,-2) -- (8,-2) node[pos = 0.5,below]{$1$};
\draw[thick, color = cadmium] (4,-2) to[bend right = 50] (8,-2);
\draw[thick, color = cadmium] (6,-3.1)node{$2$};

\filldraw (6,0) circle (0.05 cm) node[above]{$v$};

\filldraw (4,-2) circle (0.05 cm) node[below left]{$v_1$};
\filldraw (6,-2) circle (0.05 cm) node[below]{$v_2$};
\filldraw (8,-2) circle (0.05 cm) node[below right]{$v_3$};

\end{tikzpicture}

\caption{$C$ and $C + v_1v_3$, colored to avoid a rainbow $C_4$-copy}\label{case 1}
\end{center}
\end{figure}

\item $|V(C)| = 4$, and $C = P_4$

In this case, we label the vertices of $C$ in a natural linear order, so $v_1,v_4$ are the endpoints of the path. We may add, for example, $v_1v_3$, and color $C + v_1v_3$ to avoid a rainbow $C_4$-copy. We depict this addition and an admissible coloring in Figure~\ref{case 2}.

\begin{figure}[h]
\begin{center}

\begin{tikzpicture}

\draw[thick, color = red] (-1,0) to[bend right](-4,-2);
\draw[thick, color = cadmium] (-1,0) to[bend right](-2,-2);
\draw[thick, color = amber] (-1,0) to[bend left](0,-2);
\draw[thick, color = forest] (-1,0) to[bend left](2,-2);
\draw[thick, color = red] (-3.5,-0.9) node{$1$};
\draw[thick, color = cadmium] (-2,-0.9) node{$2$};
\draw[thick, color = amber] (0,-0.9) node{$3$};
\draw[thick, color = forest] (1.5,-0.9) node{$4$};

\filldraw (-1,0) circle (0.05 cm) node[above]{$v$};
\filldraw (-4,-2) circle (0.05 cm) node[below left]{$v_1$};
\filldraw (-2,-2) circle (0.05 cm) node[below]{$v_2$};
\filldraw (0,-2) circle (0.05 cm) node[below]{$v_3$};
\filldraw (2,-2) circle (0.05 cm) node[below right]{$v_4$};
\draw (-4,-2) -- (2,-2);

\draw[thick, color = red] (4.5,-0.9) node{$1$};
\draw[thick, color = cadmium] (6,-0.9) node{$2$};
\draw[thick, color = amber] (8,-0.9) node{$3$};
\draw[thick, color = forest] (9.5,-0.9) node{$4$};

\draw[thick, color = red] (7,0) to[bend right](4,-2);
\draw[thick, color = cadmium] (7,0) to[bend right](6,-2);
\draw[thick, color = amber] (7,0) to[bend left](8,-2);
\draw[thick, color = forest] (7,0) to[bend left](10,-2);
\draw[thick, color = amber] (4,-2) -- (6,-2) node[pos = 0.5, below]{$3$};
\draw[thick, color = red] (6,-2) -- (8,-2) node[pos=0.5, below]{$1$};
\draw[thick, color = cadmium] (8,-2) -- (10,-2) node[pos=0.5, below]{$2$};
\draw[thick, color = forest] (4,-2) to[bend right = 50] (8,-2);
\draw[thick, color = forest] (6, -3.1) node{$4$};

\filldraw (7,0) circle (0.05 cm) node[above]{$v$};

\filldraw (4,-2) circle (0.05 cm) node[below left]{$v_1$};
\filldraw (6,-2) circle (0.05 cm) node[below]{$v_2$};
\filldraw (8,-2) circle (0.05 cm) node[below right]{$v_3$};
\filldraw (10,-2) circle (0.05 cm) node[below right]{$v_4$};

\end{tikzpicture}

\caption{$C$ and $C + v_1v_3$, colored to avoid a rainbow $C_4$-copy}\label{case 2}
\end{center}
\end{figure}

\item $|V(C)| = 4$ and $C = S_3$

In this case, $C$ is a star, say with center $v_1$ and leaves $v_2,v_3,v_4$. We may add any edge between leaves, say $v_3v_4$, and color $C + v_3v_4$ to avoid a rainbow $C_4$-copy. We depict this addition and an admissible coloring in Figure~\ref{case 3}.

\begin{figure}[h]
\begin{center}

\begin{tikzpicture}

\draw[thick, color = red] (-1,0) to[bend right](-4,-2);
\draw[thick, color = cadmium] (-1,0) to[bend right](-2,-2);
\draw[thick, color = amber] (-1,0) to[bend left](0,-2);
\draw[thick, color = forest] (-1,0) to[bend left](2,-2);
\draw[thick, color = red] (-3.5,-0.9) node{$1$};
\draw[thick, color = cadmium] (-2,-0.9) node{$2$};
\draw[thick, color = amber] (0,-0.9) node{$3$};
\draw[thick, color = forest] (1.5,-0.9) node{$4$};

\filldraw (-1,0) circle (0.05 cm) node[above]{$v$};
\filldraw (-4,-2) circle (0.05 cm) node[below left]{$v_1$};
\filldraw (-2,-2) circle (0.05 cm) node[below]{$v_2$};
\filldraw (0,-2) circle (0.05 cm) node[below]{$v_3$};
\filldraw (2,-2) circle (0.05 cm) node[below right]{$v_4$};
\draw (-4,-2) -- (-2,-2);
\draw (-4,-2) to[bend right] (0,-2);
\draw (-4,-2) to[bend right = 50] (2,-2);

\draw[thick, color = red] (4.5,-0.9) node{$1$};
\draw[thick, color = cadmium] (6,-0.9) node{$2$};
\draw[thick, color = amber] (8,-0.9) node{$3$};
\draw[thick, color = forest] (9.5,-0.9) node{$4$};

\draw[thick, color = red] (7,0) to[bend right](4,-2);
\draw[thick, color = cadmium] (7,0) to[bend right](6,-2);
\draw[thick, color = amber] (7,0) to[bend left](8,-2);
\draw[thick, color = forest] (7,0) to[bend left](10,-2);

\draw[thick, color = amber] (4,-2) -- (6,-2) node[pos = 0.5, above]{$3$};
\draw[thick, color = forest] (4,-2) to[bend right] (8,-2);
\draw[thick, color = cadmium] (4,-2) to[bend right = 50] (10,-2);
\draw[thick, color = red] (8,-2) -- (10,-2) node[pos = 0.5, above]{$1$};
\draw[thick, color = forest] (6,-2.8) node{$4$};
\draw[thick, color = cadmium] (7, -3.6) node{$2$};

\filldraw (7,0) circle (0.05 cm) node[above]{$v$};

\filldraw (4,-2) circle (0.05 cm) node[below left]{$v_1$};
\filldraw (6,-2) circle (0.05 cm) node[below]{$v_2$};
\filldraw (8,-2) circle (0.05 cm) node[below right]{$v_3$};
\filldraw (10,-2) circle (0.05 cm) node[below right]{$v_4$};

\end{tikzpicture}

\caption{$C$ and $C + v_3v_4$, colored to avoid a rainbow $C_4$-copy}\label{case 3}
\end{center}
\end{figure}
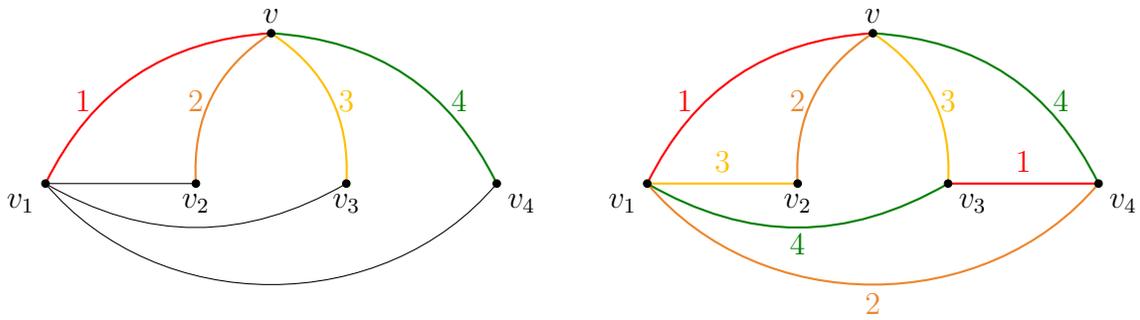

\item $|V(C)| = 5$, and $C$ has diameter $4$

In this case, $C$ is the $5$-vertex path $P_5$. Say $v_1,v_5$ are the leaves of $C$. We may add the edge $v_1v_5$ and color $C + v_1v_5$ to avoid a rainbow $C_4$-copy. We depict this addition and an admissible coloring in Figure~\ref{case 4}.

\begin{figure}[h]
\begin{center}

\begin{tikzpicture}

\draw[thick, color = red] (-2,0) to[bend right](-6,-2);
\draw[thick, color = cadmium] (-2,0) to[bend right](-4,-2);
\draw[thick, color = amber] (-2,0) to(-2,-2);
\draw[thick, color = forest] (-2,0) to[bend left](0,-2);
\draw[thick, color = blue] (-2,0) to[bend left](2,-2);

\draw (-6,-2) --(2,-2);
\draw[thick, color = red] (-5,-0.5) node{$1$};
\draw[thick, color = cadmium] (-3.5,-0.5) node{$2$};
\draw[thick, color = amber] (-1.8,-0.5) node{$3$};
\draw[thick, color = forest] (-0.5,-0.5) node{$4$};
\draw[thick, color = blue] (1,-0.5) node{$5$};

\filldraw (-2,0) circle (0.05 cm) node[above]{$v$};
\filldraw (-6,-2) circle (0.05 cm) node[below left]{$v_1$};
\filldraw (-4,-2) circle (0.05 cm) node[below]{$v_2$};
\filldraw (-2,-2) circle (0.05 cm) node[below]{$v_3$};
\filldraw (0,-2) circle (0.05 cm) node[below]{$v_4$};
\filldraw (2,-2) circle (0.05 cm) node[below right]{$v_5$};

\draw[thick, color = red] (-2,-4) to[bend right](-6,-6);
\draw[thick, color = cadmium] (-2,-4) to[bend right](-4,-6);
\draw[thick, color = amber] (-2,-4) to(-2,-6);
\draw[thick, color = forest] (-2,-4) to[bend left](0,-6);
\draw[thick, color = blue] (-2,-4) to[bend left](2,-6);

\draw[thick, color = red] (-4,-6) -- (-2,-6) node[pos = 0.5, above]{$1$};
\draw[thick, color = cadmium] (-2,-6) -- (0,-6) node[pos = 0.5, above]{$2$};
\draw[thick, color = amber] (0,-6) -- (2,-6) node[pos = 0.5, above]{$3$};
\draw[thick, color = forest] (2,-6) to[bend left = 30] (-6,-6);
\draw[thick, color = forest] (-2,-6.9) node{$4$};
\draw[thick, color = blue] (-6,-6) -- (-4,-6)node[pos = 0.5, above]{$5$};

\draw[thick, color = red] (-5,-4.5) node{$1$};
\draw[thick, color = cadmium] (-3.5,-4.5) node{$2$};
\draw[thick, color = amber] (-1.8,-4.5) node{$3$};
\draw[thick, color = forest] (-0.5,-4.5) node{$4$};
\draw[thick, color = blue] (1,-4.5) node{$5$};

\filldraw (-2,-4) circle (0.05 cm) node[above]{$v$};
\filldraw (-6,-6) circle (0.05 cm) node[below left]{$v_1$};
\filldraw (-4,-6) circle (0.05 cm) node[below]{$v_2$};
\filldraw (-2,-6) circle (0.05 cm) node[below]{$v_3$};
\filldraw (0,-6) circle (0.05 cm) node[below]{$v_4$};
\filldraw (2,-6) circle (0.05 cm) node[below right]{$v_5$};

\end{tikzpicture}

\caption{$C$ and $C + v_1v_5$, colored to avoid a rainbow $C_4$-copy}\label{case 4}
\end{center}
\end{figure}
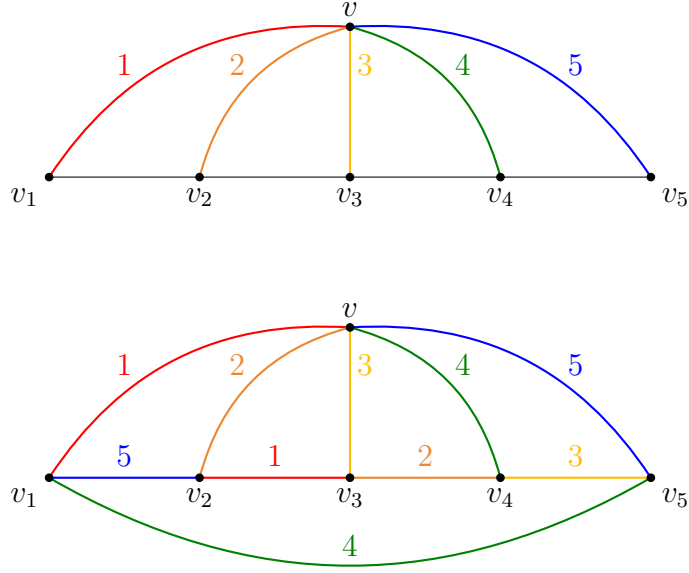

\item $|V(C)| = 5$, and $C$ has diameter $3$

In this case, $C$ is the unique $5$-vertex tree which contains one vertex of degree $3$. Say $v_3$ is the vertex of degree $3$, and $v_4,v_5$ are the leaves of $C$ which are adjacent to $v_3$. We may add the edge $v_4v_5$ and color $C + v_4v_5$ to avoid a rainbow $C_4$-copy. We depict this addition and an admissible coloring in Figure~\ref{case 5}.

\begin{figure}[h]
\begin{center}

\begin{tikzpicture}

\draw[thick, color = red] (-2,0) to[bend right](-6,-2);
\draw[thick, color = cadmium] (-2,0) to[bend right](-4,-2);
\draw[thick, color = amber] (-2,0) to(-2,-2);
\draw[thick, color = forest] (-2,0) to[bend left](0,-2);
\draw[thick, color = blue] (-2,0) to[bend left](2,-2);

\draw (-6,-2) --(0,-2);
\draw (-2,-2) to[bend right = 50] (2, -2);
\draw[thick, color = red] (-5,-0.5) node{$1$};
\draw[thick, color = cadmium] (-3.5,-0.5) node{$2$};
\draw[thick, color = amber] (-1.8,-0.5) node{$3$};
\draw[thick, color = forest] (-0.5,-0.5) node{$4$};
\draw[thick, color = blue] (1,-0.5) node{$5$};

\filldraw (-2,0) circle (0.05 cm) node[above]{$v$};
\filldraw (-6,-2) circle (0.05 cm) node[below left]{$v_1$};
\filldraw (-4,-2) circle (0.05 cm) node[below]{$v_2$};
\filldraw (-2,-2) circle (0.05 cm) node[below left]{$v_3$};
\filldraw (0,-2) circle (0.05 cm) node[below]{$v_4$};
\filldraw (2,-2) circle (0.05 cm) node[below right]{$v_5$};

\draw[thick, color = red] (-2,-4) to[bend right](-6,-6);
\draw[thick, color = cadmium] (-2,-4) to[bend right](-4,-6);
\draw[thick, color = amber] (-2,-4) to(-2,-6);
\draw[thick, color = forest] (-2,-4) to[bend left](0,-6);
\draw[thick, color = blue] (-2,-4) to[bend left](2,-6);

\draw[thick, color = forest] (-4,-6) -- (-2,-6) node[pos = 0.5, above]{$4$};
\draw[thick, color = blue] (-2,-6) -- (0,-6) node[pos = 0.5, above]{$5$};
\draw[thick, color = amber] (0,-6) -- (2,-6) node[pos = 0.5, above]{$3$};
\draw[thick, color = cadmium] (-2,-6) to[bend right = 50] (2, -6);
\draw[thick, color = cadmium] (0,-7.1) node{$2$};
\draw[thick, color = amber] (-6,-6) -- (-4,-6)node[pos = 0.5, above]{$3$};

\draw[thick, color = red] (-5,-4.5) node{$1$};
\draw[thick, color = cadmium] (-3.5,-4.5) node{$2$};
\draw[thick, color = amber] (-1.8,-4.5) node{$3$};
\draw[thick, color = forest] (-0.5,-4.5) node{$4$};
\draw[thick, color = blue] (1,-4.5) node{$5$};

\filldraw (-2,-4) circle (0.05 cm) node[above]{$v$};
\filldraw (-6,-6) circle (0.05 cm) node[below left]{$v_1$};
\filldraw (-4,-6) circle (0.05 cm) node[below]{$v_2$};
\filldraw (-2,-6) circle (0.05 cm) node[below left]{$v_3$};
\filldraw (0,-6) circle (0.05 cm) node[below]{$v_4$};
\filldraw (2,-6) circle (0.05 cm) node[below right]{$v_5$};

\end{tikzpicture}

\caption{$C$ and $C + v_4v_5$, colored to avoid a rainbow $C_4$-copy} \label{case 5}
\end{center}
\end{figure}
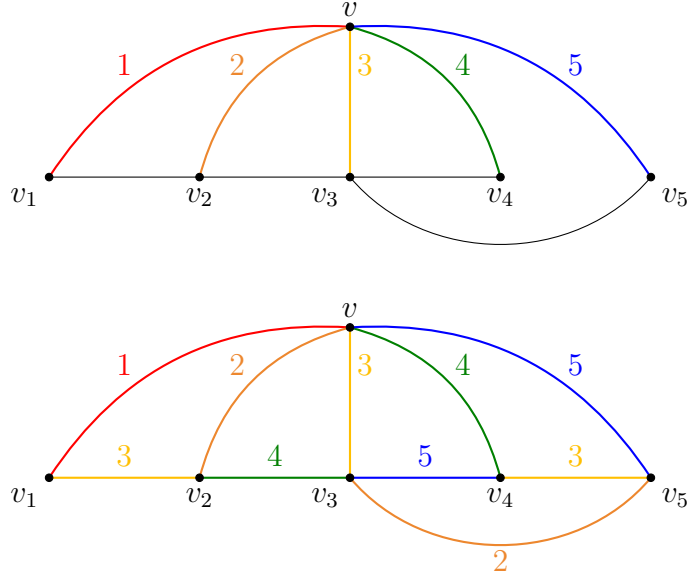

\item $|V(C)| = 5$, and $C$ has diameter $2$

  Observe that this case is impossible, as $C$ is the star $S_4$, say with central vertex $v_1$; we have $|N(v) \cap N(v_1)| = 4$, which is forbidden in a rainbow $C_4$-saturated graph by \cref{observations} Case~\ref{obs:nighbr}.
\hfill $\diamondsuit$
\end{enumerate}
\end{proof}

Now, we combine Claims~\ref{core nbhds} and~\ref{L comps} to bound $|\mathcal{C}_s|$. Our goal is to bound the number of components in $\mathcal{C}_s$ on fewer than $6$ vertices. We have two cases, based upon the outcome of Claim~\ref{core nbhds}. In each, we show that a component $C \in \mathcal{C}_s$ with $|V(C)| < 6$ must either have $|V(C)| = 1$ (which can happen at most once) or correspond to a component $C' \in \mathcal{C}_v$ with $|C'| \leq 2$ induced by some core neighborhood. Since Claim~\ref{L comps} implies that each core neighborhood induces at most two such components, and $|S| \leq 35$, this will imply that at most $71$ components $C \in \mathcal{C}_s$ have $|V(C)| < 6$.

\begin{enumerate}
  \item Claim~\ref{core nbhds} returns $S$ such that $L \subseteq N(S)$. 

    Suppose that $C \in \mathcal{C}_s$ has $2 \leq |V(C)| < 6$. We consider $V(C) \cap L$, which is contained in $N(S)$. If $V(C) \cap L$ intersects two core neighborhoods, say in $w_1 \in N(v) \setminus D$ and $w_2 \in N(u) \setminus D$, then either the component of $w_1$ in $G[N(v) \setminus D]$ has size smaller than $3$, or the component of $w_2$ in $G[N(u) \setminus D]$ does. If $V(C) \cap L$ is contained in one core neighborhood, say $N(v) \setminus D$, then observe that either $C \in \mathcal{C}_v'$ or $V(C)$ induces at least two components of $\mathcal{C}_v$. Indeed, if $V(C)$ induces exactly one component $C' \in \mathcal{C}_v$, then since $C$ is a tree and $C'$ is a connected subgraph of $C$ which contains all the leaves of $C$, we must have $C' = C$. Thus, either $C \in \mathcal{C}_v'$, in which case Claim~\ref{L comps} implies that $C$ is a component of size $2$ contained in a core neighborhood, or $V(C)$ induces at least two components in $N(v) \setminus D$, one of which must contain at most $2$ vertices.

  \item Claim~\ref{core nbhds} returns $S$ such every $C \in \mathcal{C}_s$ with $V(C) \not\subseteq N(S)$ intersects $N(S)$ at least $5$ times.

    In this case, the condition on $S$ guarantees that every $C \in \mathcal{C}_s$ with $V(C) \not\subseteq N(S)$ has $|V(C)| \geq 6$. We thus consider components $C \in \mathcal{C}_s$ with $V(C) \subseteq N(S)$. For such a component $C$, we have $V(C) \cap L \subseteq S$, and similarly to Case 1, if $|V(C)| < 6$, then $C$ must induce a component of size at most $2$ in some core neighborhood.

\end{enumerate}

Thus, in either case, we conclude that $\mathcal{C}_s$ contains at most $71$ components on fewer than $6$ vertices. As a simple lower bound, we thus have 

\[|\mathcal{C}_s| \leq \frac{\left(\sum_{C \in \mathcal{C}_s} |V(C)|\right) - 71}{6} + 71 \leq \frac{|V \setminus D| - 71}{6} + 71 = \frac{|V \setminus D|}{6} + \frac{355}{6}.\]

Thus, using \Cref{eq:edgeDensity}, we have

\[|E(G)| \geq  |E(G[D])| + 2|V \setminus D| - |\mathcal{C}_s| \geq  |E(G[D])| + \frac{11}{6}|V \setminus D| - \frac{355}{6}.\]
Recall that by application of \Cref{dominating sets}, we have $D$ either small or of high average degree.
If $|D| \leq \frac{5}{11} \varepsilon$, then we have 
\[|E(G)| \geq \frac{11}{6} \left( 1 - \frac{5}{11}\varepsilon\right)n - \frac{355}{6} = \frac{11}{6}n - \left( \frac{5\varepsilon}{6}n + \frac{355}{6} \right) > \left(\frac{11}{6} - \varepsilon \right)n.\]
On the other hand, if $G[D]$ has average degree $4$, then 
\[|E(G)| \geq  2|D|  + \frac{11}{6}|V \setminus D| - \frac{355}{6} > \frac{11}{6} n - \frac{355}{6} > \left( \frac{11}{6} - \varepsilon \right)n. \qedhere\]
\end{proof}

\section{Upper bounds for longer cycles}\label{c56 section}

We now exhibit some constructions which provide upper bounds on $\mathrm{sat}^*(n,C_5)$ and $\mathrm{sat}^*(n,C_6)$. Each of these contains a small dominating set of vertices $D$, with the complement of $D$ essentially inducing a disjoint union of cliques. 

\begin{const}\label{c5const}
For $n \geq 8$, let $G(n,C_5)$ be the $n$-vertex graph defined as follows. Designate two vertices $u,v$, which will be universal. Moreover, add a maximum matching in $V(G)\setminus\{u,v\}$.

\end{const}

Observe that $G(n,C_5)$ contains $2n - 3$ edges which are incident to a universal vertex, and $\lfloor \frac{n-2}{2} \rfloor$ edges which are not. Thus, $|E(G_n)| = \lfloor \frac{5n}{2} \rfloor - 4$. 

\fivecycle*

\begin{proof}
It suffices to show that for $n \geq 9$, $G(n,C_5)$ is rainbow $C_5$-saturated. We begin by labelling vertices. Let $u,v$ be the universal vertices of $G(n,C_5)$. We label the $\lfloor \frac{n-2}{2}\rfloor$ edges of $G(n,C_5)$ which are not incident to $u$ or $v$ as $e_i$ ($i \in \{1, \dots \lfloor \frac{n-2}{2}\rfloor\}$), and for each $e_i$, we label the endpoints of $e_i$ as $x_i,y_i$. If $n$ is odd, $G$ contains one vertex which is adjacent only to $u$ and $v$, which we label $z$.  

We first exhibit a proper edge-coloring of $G(n,C_5)$ which is rainbow $C_5$-free. For each pair $x_i,y_i$, we choose $c(ux_i) = c(vy_i) = 2i - 1$ and $c(uy_i) = c(vx_i) = 2i$. If $z$ exists, we choose $c(uz) = n-2$ and $c(vz) = n-1$. We may choose $c(uv)$ and $c(x_iy_i)$ freely; for simplicity, we assign a single color $0$ to $uv$ and each $x_iy_i$. This coloring is depicted in Figure~\ref{c5coloring}.

\begin{figure}[h]
\begin{center}

\begin{tikzpicture}[scale = 1.5]

\draw[thick, color = brilliantrose] (-1,0) -- (1,0);
\draw[color = brilliantrose] (0,0.1) node{\small 0};

\draw[thick, color = brilliantrose] (0,-1) -- (0,-1.5);
\draw[color = brilliantrose] (0.1, -1.25) node{\small 0};

\draw[thick, color = brilliantrose] (0,-2.5) -- (0,-3);
\draw[color = brilliantrose] (0.1, -2.75) node{\small 0};

\draw[thick, color = brilliantrose] (0,-5) -- (0,-5.5);
\draw[color = brilliantrose] (0.1, -5.25) node{\small 0};

\draw[thick, color = red] (-1,0) to[bend right] (0,-1);
\draw[color = red] (-0.5, -0.5) node{\small 1};

\draw[thick, color = cadmium] (0,-1) to[bend right] (1,0);
\draw[color = cadmium] (0.5, -0.5) node{\small 2};

\draw[thick, color = cadmium] (-1,0) to[bend right] (0,-1.5);
\draw[color = cadmium] (-0.5, -1.4) node{\small 2};

\draw[thick, color = red] (0,-1.5) to[bend right] (1,0);
\draw[color = red] (0.5, -1.4) node{\small 1};

\draw[thick, color = amber] (-1,0) to[bend right = 50] (0,-2.5);
\draw[color = amber] (-0.5, -2) node{\small 3};

\draw[thick, color = forest] (0,-2.5) to[bend right = 50] (1,0);
\draw[color = forest] (0.5, -2) node{\small 4};

\draw[thick, color = forest] (-1,0) to[bend right = 50] (0,-3);
\draw[color = forest] (-0.5, -2.9) node{\small 4};

\draw[thick, color = amber] (0,-3) to[bend right = 50] (1,0);
\draw[color = amber] (0.5, -2.9) node{\small 3};

\draw[thick, color = blue] (-1,0) to[bend right = 50] (0,-5);
\draw[color = blue] (-0.7, -4) node{\small $n - 3$};

\draw[thick, color = byzantine] (0,-5) to[bend right = 50] (1,0);
\draw[color = byzantine] (0.7, -4) node{\small $n-2$};

\draw[thick, color = byzantine] (-1,0) to[bend right = 50] (0,-5.5);
\draw[color = byzantine] (-1.1, -5) node{\small $n-2$};

\draw[thick, color = blue] (0,-5.5) to[bend right = 50] (1,0);
\draw[color = blue] (1.1, -5) node{\small $n -3$};

\filldraw (-1,0) circle (0.05 cm);
\filldraw (1,0) circle (0.05 cm);
\filldraw (0,-1) circle (0.05 cm);
\filldraw (0,-1.5) circle (0.05 cm);
\filldraw (0,-2.5) circle (0.05 cm);
\filldraw (0,-3) circle (0.05 cm);
\draw (0,-4) node{$\vdots$};
\filldraw (0,-5) circle (0.05 cm);
\filldraw (0,-5.5) circle (0.05 cm);

\end{tikzpicture}

\caption{An edge-coloring of $G(n,C_5)$ containing no rainbow $C_5$-copy}\label{c5coloring}
\end{center}

\end{figure}
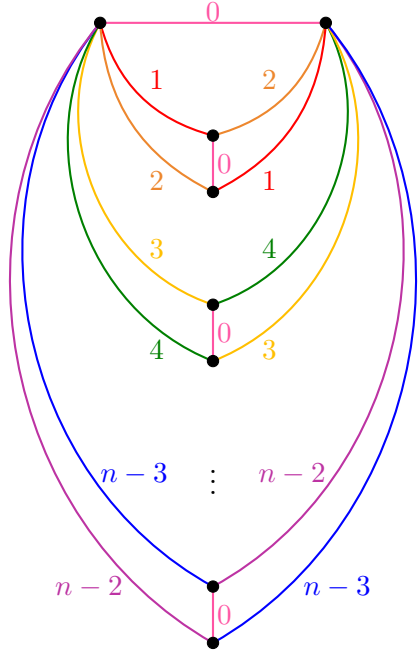

To see that the described edge-coloring of $G(n,C_5)$ is rainbow $C_5$-free, observe first that every $C_5$-copy in $G(n,C_5)$ contains a copy of $P_4$ with endpoints $u$ and $v$. Observe that in the described edge-coloring, no $P_4$-copy between $u$ and $v$ is rainbow.

We now show that $G(n,C_5)$ is rainbow $C_5$-saturated. Let $G'$ be a supergraph of $G(n,C_5)$ (possibly with $G' = G(n,C_5)$). Observe that if a proper edge-coloring of $G'$ contains a rainbow $P_4$-copy with endpoints $u,v$, then under this edge-coloring, $G'$ contains a rainbow $C_5$-copy. Indeed, suppose such a rainbow $P_4$-copy exists, say $u x_i y_i v$, with edge colors $c(ux) = 1, c(xy) = 2, c(y v) = 3$. Now, since $n \geq 9$, $S \coloneqq |(N(u)\cap N(v)) \setminus \{x,y\}| \geq 5$. For any vertex $w \in S$, either $u x y v w u$ is a rainbow $C_5$-copy or else one of $c(uw), c(vw)$ is in $\{1,2,3\}$. Since the edge-coloring under consideration is proper, we cannot have $c(uw) = 1$ or $c(vw) = 3$, and so one of four outcomes must hold: either $c(uw) \in \{2,3\}$ or $c(vw) \in \{1,2\}$. Each of these four outcomes holds for at most one vertex $w \in S$. Thus, as $|S| \geq 5$, under the given edge-coloring, $G'$ contains at least one rainbow $C_5$-copy. 

By this observation, it suffices to show that if any edge $e$ is added to $G(n,C_5)$, then every proper edge-coloring of $G(n,C_5) + e$ contains a rainbow $P_4$-copy with endpoints $u,v$. We now establish this fact. Suppose for a contradiction that there exists a proper edge-coloring of $G(n,C_5) + e$ so that no copy of $P_4$ with endpoints $u,v$ is rainbow. For every $i \in \{1, \dots \lfloor \frac{n-2}{2}\rfloor\}$, we have that $u x_i y_i v$ and $u y_i x_i v$ are $P_4$-copies with endpoints $u,v$, and so to ensure that neither is rainbow, we must have $c(ux_i) = c(vy_i)$ and $c(uy_i) = c(vx_i)$. Now, consider the endpoints of $e$. Without loss of generality, $e$ is incident to $x_1$. We shall (re)label the other endpoint of $e$ as $w$; note that, since $e \not\in E(G(n,C_5))$, we must have $w \not\in \{u,v, y_1\}$. Now, consider the path $u x_1 w v$. We know that $c(u x_1) \neq c(wv)$, since $c(y_1 v) = c(ux_1)$ and $y_1 \neq w$. Thus, if $u x_1 w v$ is not a rainbow $P_4$-copy, then either $c(ux_1) = c(x_1 w)$ or $c(x_1w) = c(wv)$. However, either of these equalities implies that the edge-coloring under consideration is not proper, a contradiction.
\end{proof}

Next, we present a construction which gives an upper bound on $\mathrm{sat}^*(n,C_6)$.
An obvious analog to Construction~\ref{c5const} would be to take a small set of universal vertices and tile their common neighborhood with copies of $K_3$. However, for large enough $n$, such a graph cannot be edge-colored to avoid a rainbow $C_6$-copy. Instead of a pair of universal vertices, our construction features a small dominating set. In order to achieve saturation, we also must introduce a more complicated structure among a small subset of the vertices which are not in this dominating set.

\begin{const}\label{c6 const}
For $n \equiv 2 \mod 3$ and $n \geq 14$, let $G(n,C_6)$ be the $n$-vertex graph defined as follows. We designate eight vertices $\{v_1, v_2, \dots, v_8\}$, which we shall term the \textit{core vertices}. The core vertices induce a copy of $K_8$ with the matching $\{v_1v_3, v_2v_8, v_4v_6, v_5v_7\}$ removed. We partition $V(G) \setminus \{v_1, \dots, v_8\}$ into $\frac{n-8}{3}$ sets of size $3$. We label these sets as $T_i = \{x_i,y_i,z_i\}$. For each $i \in \{1, \dots, \frac{n-8}{3}\}$, we add all edges between vertices in $T_i$, as well as the edges $v_1x_i, v_1y_i, v_2 z_i, v_3 y_i$.

\end{const}

When $n \equiv i \mod 3$ with $i \neq 2$ we let $G(n,C_6)$ be the graph obtained from Construction~\ref{c6 const} by forming $G( n - 2 + i, C_6)$ and then adding $2 -i$ additional vertices, adjacent to $v_1, v_3$ ,and (if $2 - i = 2$) to one another. Note that in any congruence case, $E(G(n,C_6)) = \frac{7}{3}n + O(1)$. 

We start by describing an edge-coloring under which $G(n,C_6)$ is rainbow-$C_6$-free. Color the core using the edge-coloring exhibited in Figure~\ref{core coloring}(a), which can be viewed as a decomposition of the core into six perfect matchings. For $i \in \{1, \dots, \frac{n-8}{3}\}$, we choose $c(v_1 x_i) = 3 + 3i$, $c(v_1 y_i) = 4 + 3i$, $c(v_2 z_i) = 3 + 3i$, $c(v_3 y_i) = 3 + 3i$, and $c(y_i z_i) = 0$, $c(x_i z_i) = 4 + 3i$, $c(x_i y_i) = 5 + 3i$. Finally, if $n \not\equiv 2\mod 3$, we must consider the additional vertices adjacent to $v_1$ and $v_3$. If there is one such additional vertex, then edges incident to this vertex may be colored in any way which maintains a proper edge-coloring. It there are two, say $s,t$, we choose $c(v_1 s) = c(v_3 t)$ and $c(v_1 t) = c(v_3 s)$, which ensures that neither $v_1 s t v_3$ nor $v_1 t s v_3$ is a rainbow $P_4$-copy. We depict representative sets of non-core vertices $T_i$, with the described edge-coloring, in Figure~\ref{core coloring}(b).

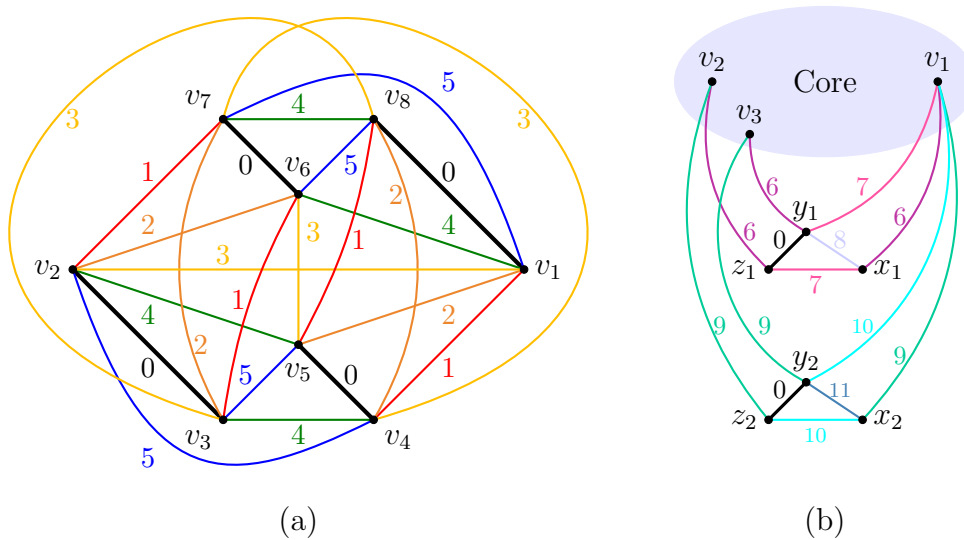
\begin{figure}[h]
\begin{center}

\begin{tikzpicture}

\draw 
(0,-2) node[below]{(a)}
(7,-2) node[below]{(b)}
;
\clip(-4,-2) rectangle (9.2,5);

\draw[thick, color = red] (-3,1) -- (-1,3);
\draw[color = red] (-2,2.3) node{\small 1}; 

\draw[thick, color = cadmium] (-3,1) -- (0,2);
\draw[color = cadmium] (-2, 1.6) node{\small 2}; 

\draw[thick, color = forest] (-3,1) -- (0,0);
\draw[color = forest] (-2, 0.4) node{\small 4};

\draw[ultra thick] (-3,1) -- (-1,-1);
\draw (-2,-0.3) node{\small 0};

\draw[ultra thick] (3,1) -- (1,3);
\draw (2,2.3) node{\small 0};

\draw[thick, color = forest] (3,1) -- (0,2);
\draw[color = forest] (2,1.6) node{\small 4};

\draw[thick, color = cadmium] (3,1) -- (0,0);
\draw[color = cadmium] (2,0.4) node{\small 2};

\draw[thick, color = red] (3,1) -- (1,-1);
\draw[color = red] (2,-0.3) node{\small 1};

\draw[thick, color = blue] (0,0) -- (-1,-1);
\draw[color = blue] (-0.7, -0.4) node{\small 5};

\draw[ultra thick] (0,0) -- (1,-1);
\draw (0.7, -0.4) node{\small 0};

\draw[ultra thick] (0,2) -- (-1,3);
\draw (-0.7, 2.4) node{\small 0};

\draw[thick, color = blue] (0,2) -- (1,3);
\draw[color = blue] (0.7, 2.4) node{\small 5};

\draw[thick, color = blue] (-3,1) .. controls (-2,-2) and (-1,-2) .. (1,-1);
\draw[color = blue] (-2,-1.5) node{\small 5};

\draw[thick, color = blue] (3,1) .. controls (2,4) and (1,4) .. (-1,3);
\draw[color = blue] (2,3.5) node{\small 5};

\draw[thick, color = cadmium] (-1,3) to[bend right] (-1,-1);
\draw[color = cadmium] (-1.3, 0) node{\small 2};

\draw[thick, color = cadmium] (1,3) to[bend left] (1,-1);
\draw[color = cadmium] (1.3,2) node{\small 2};

\draw[thick, color = amber] (-3,1)-- (3,1);
\draw[color = amber] (-1,1.2) node{\small 3};

\draw[thick, color = forest] (-1,3)-- (1,3); 
\draw[color = forest] (0,3.2) node{\small 4};

\draw[thick, color = amber] (0,2)-- (0,0);
\draw[color = amber] (0.2, 1.5) node{\small 3};


\draw[thick, color = amber] (1,3).. controls (0,7) and (-8, 1).. (-1,-1);
\draw[color = amber] (-3,3) node{\small 3};

\draw[thick, color = amber] (-1,3).. controls (0,7) and (8, 1).. (1,-1);
\draw[color = amber] (3,3) node{\small 3};

\draw[thick, color = red] (1,3) to[bend left = 10] (0,0);
\draw[color = red] (0.8, 1.4) node{\small 1};

\draw[thick, color = red] (-1,-1) to[bend left = 10] (0,2);
\draw[color = red] (-0.8, 0.6) node{\small 1};

\draw[thick, color = forest] (-1,-1)-- (1,-1);
\draw[color = forest] (0,-1.2) node{\small 4};

\filldraw (-1,3) circle (0.05 cm) node[above left]{$v_7$};
\filldraw (1,3) circle (0.05 cm) node[above right]{$v_8$};
\filldraw (0,2) circle (0.05 cm);
\draw (0,2.1) node[above]{$v_6$};
\filldraw (-3,1) circle (0.05 cm) node[left]{$v_2$};
\filldraw (3,1) circle (0.05 cm) node[right]{$v_1$};
\filldraw (0,0) circle (0.05 cm);
\draw (0,-0.1) node[below]{$v_5$};
\filldraw (1,-1) circle (0.05 cm) node[below right]{$v_4$};
\filldraw (-1,-1) circle (0.05 cm) node[below left]{$v_3$};


\begin{scope}[xshift=7cm,yshift=3.5cm]
\filldraw[color = blue!10!white, fill = blue!10!white] (0,0) ellipse (2  and 1 );
\draw (0,0) node{Core};

\draw[thick, color = byzantine] (1.5,0) to[bend left = 30] (0.5,-2.5);
\draw[color = byzantine] (1,-1.8) node{\footnotesize 6};

\draw[thick, color = brilliantrose] (1.5,0) to[bend left = 30] (-0.25,-2);
\draw[color = brilliantrose] (0.5,-1.4) node{\footnotesize 7};

\draw[thick, color = byzantine] (-1,-0.7) to[bend right = 30](-0.25,-2);
\draw[color = byzantine] (-0.7,-1.4) node{\footnotesize 6};

\draw[thick, color = byzantine] (-1.5,0) to[bend right = 30] (-0.75,-2.5);
\draw[color = byzantine] (-1,-2) node{\footnotesize 6};

\draw[thick, color = periwinkle] (0.5, -2.5) -- (-0.25, -2);
\draw[color = periwinkle] (0.2,-2.1) node{\footnotesize 8};

\draw[thick, color = brilliantrose] (0.5, -2.5) -- (-0.75, -2.5);
\draw[color = brilliantrose] (-0.125,-2.7) node{\footnotesize 7};

\draw[very thick] (-0.75, -2.5) -- (-0.25, -2);
\draw (-0.6,-2.1) node{\footnotesize 0};

\draw[thick, color = caribbeangreen] (1.5,0) to[bend left = 30] (0.5,-4.5);
\draw[color = caribbeangreen] (-0.8,-3.25) node{\footnotesize 9};

\draw[thick, color = electriccyan] (1.5,0) to[bend left = 40] (-0.25,-4);
\draw[color = electriccyan] (0.5,-3.25) node{\scriptsize 10};

\draw[thick, color = caribbeangreen] (-1,-0.7) to[bend right = 50](-0.25,-4);
\draw[color = caribbeangreen] (-1.4,-3.25) node{\footnotesize 9};

\draw[thick, color = caribbeangreen] (-1.5,0) to[bend right = 30] (-0.75,-4.5);
\draw[color = caribbeangreen] (1,-3.65) node{\footnotesize 9};

\draw[thick, color = steelblue] (0.5, -4.5) -- (-0.25, -4);
\draw[color = steelblue] (0.2,-4.1) node{\scriptsize 11};

\draw[thick, color = electriccyan] (0.5, -4.5) -- (-0.75, -4.5);
\draw[color = electriccyan] (-0.125,-4.7) node{\scriptsize 10};

\draw[very thick] (-0.75, -4.5) -- (-0.25, -4);
\draw (-0.6,-4.1) node{\footnotesize 0};

\filldraw (-1.5, 0) circle (0.05 cm);
\draw (-1.5, 0) node[above]{$v_2$};
\filldraw (1.5, 0) circle (0.05 cm);
\draw (1.5, 0) node[above]{$v_1$};
\filldraw (-1, -0.7) circle (0.05 cm);
\draw (-1, -0.7) node[above]{$v_3$};

\filldraw (0.5,-2.5) circle(0.05 cm);
\draw (0.5, -2.5) node[right]{$x_1$};

\filldraw (-0.25,-2) circle(0.05 cm);
\draw (-0.25, -2) node[above]{$y_1$};

\filldraw (-0.75,-2.5) circle(0.05 cm);
\draw (-0.75, -2.5) node[left]{$z_1$};

\filldraw (0.5,-4.5) circle(0.05 cm);
\draw (0.5, -4.5) node[right]{$x_2$};

\filldraw (-0.25,-4) circle(0.05 cm);
\draw (-0.25, -4) node[above]{$y_2$};

\filldraw (-0.75,-4.5) circle(0.05 cm);
\draw (-0.75, -4.5) node[left]{$z_2$};
\end{scope}
\end{tikzpicture}

\caption{(a) An admissible edge-coloring of the core.
(b) Non-core triangles and their connections to the core.}\label{core coloring}
\end{center}
\end{figure}


Recall, our goal is to demonstrate the following bound.

\sixcycle*

It would suffice to show that $G(n,C_6)$ is properly rainbow $C_6$-saturated. It is possible to check this by hand; however, the verification is somewhat lengthy and requires a fair amount of case analysis. Thus, we elect to verify the desired properties using computer assistance. 

We used SageMath~\cite{sagemath}
to create SAT formulas that were solved by PicoSAT~\cite{Biere2008} and Glucose~\cite{Glucose}.
A Jupyter Notebook with our calculations is available on arXiv.
We use the straightforward approach where each edge $e\in E$ and possible color $c\in C$ gives a boolean variable $x_e^c$ indicating if the edge $e$ is colored by the color $c$. 
The SAT formula for checking the existence of a proper $C$-coloring avoiding a rainbow $C_6$-copy is the following. 
\[
\underbrace{\bigwedge_{e\in E}\left( \bigvee_{c \in C}  x_e^c \right)}_{\text{every edge has a color}} 
\wedge
\underbrace{
\bigwedge_{\substack{e_1 \cap e_2 \neq \emptyset \\e_1,e_2 \in E }} \bigwedge_{c\in C} ( \neg x_{e_1}^c  \vee  \neg x_{e_2}^c )}_{\text{adjacent edges are colored differently}}
\wedge
\underbrace{
\bigwedge_{ \substack{\text{6-cycles} \\e_1,\ldots,e_6 \in E} }
\bigwedge_{\substack{\text{distinct}\\ c_1,\ldots,c_6\in C}}
\left(
\bigvee_{i=1}^{6}  \neg x_{e_i}^{c_i} 
\right)}_{\text{No rainbow 6-cycles}}.
\]
It is sufficient to do the calculation on
a graph $H$ consisting of the core and two triangles $T_1$ and $T_2$.
We show that $H$ can be colored by 10, 11 or 12 colors while avoiding a rainbow $C_6$-copy, but it cannot be colored using 13 or more colors. In order to show 13 or more colors are not possible, we need to add two more types of clauses to the formula, forcing that all colors are used.
\[
\underbrace{\bigwedge_{c\in C}\left( \bigvee_{e \in E}  x_e^c \right)}_{\text{every color is used}} 
\wedge
\underbrace{\bigwedge_{e\in E}
\bigwedge_{c_1 \neq c_2 \in C}
\left( \neg x_e^{c_1}  \vee  \neg x_e^{c_1} \right)}_{\text{every edge has at most one color}} .
\]
The resulting formula needs a fixed number of colors. 
We first show that the core can be colored by 6 or 7 colors but not by any larger number. 
It suffices to consider number of colors up to 15 via the following arguments. Fix a proper-edge coloring without a rainbow 6-cycle.
Iterating over 6-cycles $H$ remove one of the edges whose color is not unique in $H$. 
This eventually results in a 6-cycle-free graph. 
The largest 6-cycle-free subgraph of the core has 15 edges which can be showed by an integer program. 
Adding one triangle to the core can increase the number of colors by at most 7. By testing up to 14 colors we get that core and one triangle can use at most 9 colors. Hence core and two triangles can use at most 12 colors.
Finally, we can upper bound the number of colors by 13 when showing that adding any missing edge to $H$ results in a rainbow $C_6$ in any proper coloring.

To deal with $n \equiv i \mod 3$ with $i \neq 2$, we use a graph $F$ consisting of a core, one triangle $T_1$ and an extra vertex of degree 2 adjacent to $v_1$ and $v_3$. 
It is not possible to add any edge to $F$ where a proper edge-coloring does not create a rainbow 6-cycle.

The entire calculation can be done in one day.
\section{Conclusion}
While we determined the asymptotics for 
$\mathrm{sat}^*(n,C_4)$
and 
constructed upper bounds for 
$\mathrm{sat}^*(n,C_5)$ and 
$\mathrm{sat}^*(n,C_6)$,
finding exact bounds is still open.

\begin{prob}
    Determine $\mathrm{sat}^*(n,C_k)$ for all $k$.
\end{prob}

When using SAT solver in the previous section, we encountered the following problem. 
A positive answer would simplify the computation.
\begin{prob}
    Let $G$ and $H$ be finite graphs.
    Let $K$ be the set of all $k$ where exists a proper $k$-edge-coloring of $G$ using all $k$ colors and avoiding a rainbow copy of $H$. Is $K$ an interval?
\end{prob}

\section*{Acknowledgement}
This research was supported by NSF DSM-2152490 grant.
Tomáš Masařík was supported by Polish National Science Centre SONATA-17 grant number 2021/43/D/ST6/03312.

\bibliographystyle{abbrvurl}
\bibliography{references}

\end{document}